 \gdef\xxxmark{%
   \expandafter\ifx\csname @mpargs\endcsname\relax % in minipage?
     \expandafter\ifx\csname @captype\endcsname\relax % in figure/caption?
       \marginpar{{xxx}}% not in a caption or minipage, can use marginpar
     \else
       {xxx} % notice trailing space
     \fi
   \else
     {xxx} % notice trailing space
   \fi}
 \gdef\xxx{\@ifnextchar[\xxx@lab\xxx@nolab}
 \long\gdef\xxx@lab[#1]#2{{\bf [\xxxmark #2 ---{\sc #1}]}}
 \long\gdef\xxx@nolab#1{{\bf  [\xxxmark #1]}}
 \long\gdef\xxx@lab[#1]#2{}\long\gdef\xxx@nolab#1{}%
\newtheorem{theorem}{Theorem}
\newenvironment{customthm}[1]
  {\innercustomthm}
  {\endinnercustomthm}
\newtheorem{proposition}{Proposition}
\newtheorem{conjecture}{Conjecture}
\newenvironment{customconj}[1]
  {\innercustomconj}
  {\endinnercustomthm}
\newtheorem{corollary}{Corollary}
\newtheorem{lemma}{Lemma}
\theoremstyle{definition}
\newtheorem{remark}{Remark}
\title{The Four Bars Problem}
\author{Alexandre Mauroy\thanks{Luxembourg Centre for Systems Biomedicine, University of Luxembourg, Luxembourg (alexandre.mauroy@uni.lu). This work was performed while A. Mauroy was with the Department of Electrical Engineering and Computer Science at the University of Li\`ege, Belgium and held a return grant from the Belgian Science Policy (BELSPO).} \and Perouz Taslakian\thanks{Universit\'e Libre de Bruxelles, Belgium (perouz.taslakian@ulb.ac.be)} \and Stefan Langerman\thanks{Directeur de Recherches du F.R.S.-FNRS, Universit\'e Libre de Bruxelles, Belgium (stefan.langerman@ulb.ac.be)} \and Rapha\"{e}l Jungers\thanks{ICTEAM Institute,
Universit\'e Catholique de Louvain, Belgium (raphael.jungers@uclouvain.be)}}
\begin{document}
\maketitle

\begin{abstract}
A four-bar linkage is a mechanism consisting of four rigid bars which are joined by their endpoints 
in a polygonal chain and which can rotate freely at the joints (or \emph{vertices}). 
We assume that the linkage
lies in the 2-dimensional plane so that one of the bars is held horizontally fixed.
In this paper we consider the problem of reconfiguring a four-bar linkage using an operation called a \emph{pop}. Given a polygonal cycle, a pop reflects a vertex across the line defined by its two adjacent vertices along the polygonal chain. 
Our main result shows that for certain conditions on the lengths of the bars of the four-bar linkage, the neighborhood of any configuration that can be reached by smooth motion can also be reached by pops. The proof relies on the fact that pops are described by a map on the circle with an irrational number of rotation.
\end{abstract}

\section{Introduction}

%Dynamical systems describe the motion of objects or quantities over time, with applications
%such as understanding population growth, spread of diseases, fluid mechanics, and motion planning. 
%In this paper, the problem we are interested in relates to the understanding of the motion of
%mechanical linkage chains, which are important frameworks in machines. 
%Our approach is through the modeling of this linkage framework as a dynamical system,
%which is an important tool for describing the motion of objects and the change of quantities over time.
%Applications of dynamical systems include the study of population growth and spread of diseases, as well as fluid mechanics and motion planning.
Mechanical linkage chains are important frameworks in machines and their motion has been investigated extensively. 
%understanding of the motion of mechanical linkage chains, which are important frameworks in machines.
A classical example of a well-studied framework, and perhaps the simplest, 
is the four-bar linkage.
Four-bar linkages (or \emph{quadilaterals}), often also referred to as \emph{three-bar linkages} when one of the bars is fixed~\cite{roberts-75}, have been studied in the field
of kinematics, where they are mainly used to generate curves 
by converting one type of motion (e.g. circular) into another (e.g. linear)~\cite{mccarthy-06}.
%which for a long time was thought to transform circular motion into linear motion. 
Linkages have also been of interest to mathematicians, 
who developed different tools and techniques
to understand the motions of these frameworks~\cite{benoist2004iteration,darboux1879,kapovich-96,complexGeometry-13,mermoud-00,muller-96}.
%Dynamical systems are used to understand the complex geometry of the four-bar linkage.
%\xxx[Perouz]{Check the citation: Complex geometry of polygonal linkages}

In the combinatorial geometry world, Paul Erd{\H o}s initiated the study of polygonal linkage reconfiguration with his question on
flipping the pockets of a polygon. Given a simple polygon (i.e. having no self intersection) in the Euclidean plane,
a \emph{pocket} is a maximal
connected region exterior to the polygon and interior to its convex hull.
Such a pocket is bounded by one edge of the convex hull of the polygon, called the \emph{pocket lid}, and a subchain of the polygon,
called the \emph{pocket subchain} (Figure \ref{flip}(a)).
A \emph{pocket flip} (or simply \emph{flip}) is the operation of reflecting
the pocket subchain through the line extending the pocket lid (Figure \ref{flip}(b)).
The result is a new, simple polygon of larger area
with the same edge lengths as the original polygon.
%A closed four-bar linkage can be though of as a polygon in the plane. 
A convex polygon has no pocket and hence admits no flip.
In 1935, Erd\H{o}s (at the young age of 22) asked if it is possible to convexify a polygon with a finite number of flips~\cite{erdos-35}.
The answer to this question was given four years later by Bella Nagy~\cite{nagy-39}.
Although Nagy proved that every simple polygon can indeed be convexified by a finite number of flips,
%The answer to this question turns out to be ``yes'', 
%and although 
%Nagy showed that the answer to Erd\H{o}s' question is ``yes'' 
%four years later~\cite{nagy-39}, the 
his proof was found to contain a flaw many years later.
In fact, the many different proofs of Erd\H{o}s conjecture turn out to have a long trail of interesting stories, 
which were summarized in the work of Demaine et al.\cite{Demaine-08}.
Inspired by the notion of flips, many variations of combinatorial polygonal reconfigurations were defined and studied~\cite{ballinger-03,toussaint-05}.
To list of a few, a \emph{deflation} is the inverse operation of a flip, 
and a polygon admits no deflation if no such operation results in a simple polygon.
It was shown that, unlike in the case of flips, there exist polygons that deflate infinitely~\cite{fevens-01} and that the limit of
any  infinite deflation sequence for such polygons is unique~\cite{hubard-10}.
Other studied variations are \emph{flipturns}~\cite{Gru-01, Gru-95} and \emph{mouth flips}~\cite{millet-94}. % and \emph{mirror flips}~\cite{mirror}.
One particular case of the pocket flip is an operation called \emph{pop}.
A pop reflects a single vertex $v$ of a polygon across the line defined by the vertices adjacent to $v$ (Figure \ref{pop}).
The question of whether 
polygons can be convexified by a (finite) series of pops, under various
intersection restrictions and definitional variants
is studied by Aloupis et al.~\cite{aloupis-07}. 
Dumitrescu and Hilscher show that not every polygon can be convexified with pops~\cite{dumitrescu-10}.
However, their counterexample is highly degenerate (all vertices of the polygon lie on two orthogonal lines). 
This prompts the natural question of whether only degenerate polygons exhibit this behavior. 
%Pops on start shaped and orthogonal polygons, as well as those that allow crossings are also studied. 
\begin{figure}[h]
\begin{center}
\subfigure[]{\includegraphics[width=4cm]{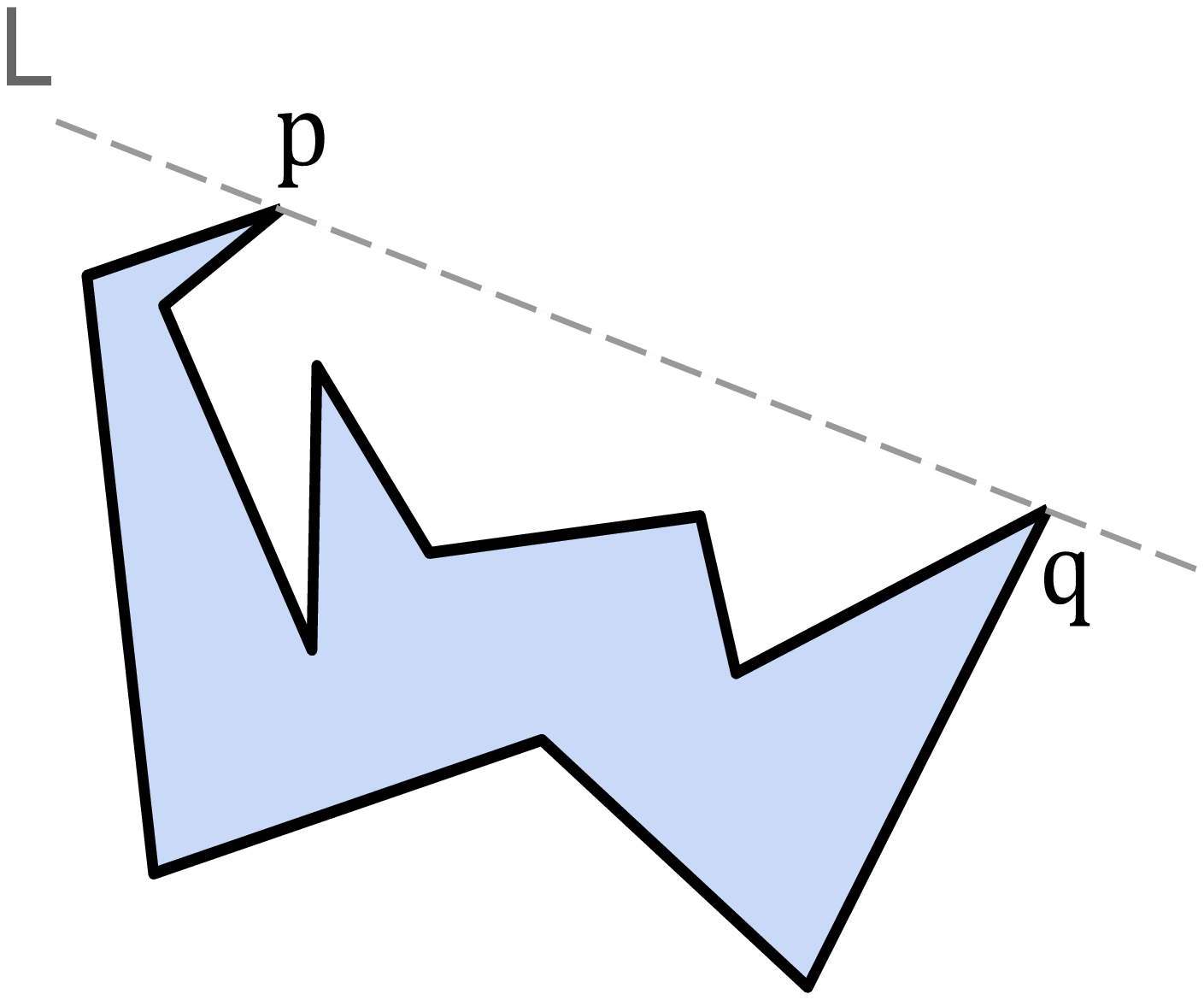}}
\hspace{1cm}
\subfigure[]{\includegraphics[width=4cm]{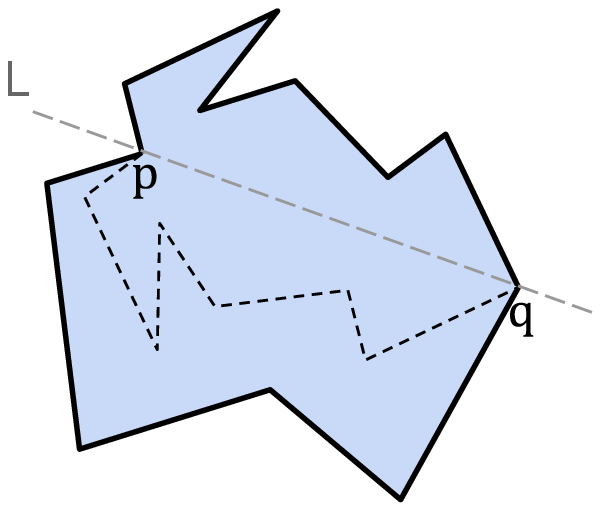}}
\caption{(a) The pocket is bounded by the segment $pq$ (the pocket lid) and a subchain of the polygon (the pocket subchain from $p$ to $q$). (b) The pocket flip is the reflection of the pocket subchain through the line $L$ extending the pocket lid.}
\label{flip}
\end{center}
\end{figure}
\begin{figure}[h]
\begin{center}
\subfigure[]{\includegraphics[height=3cm]{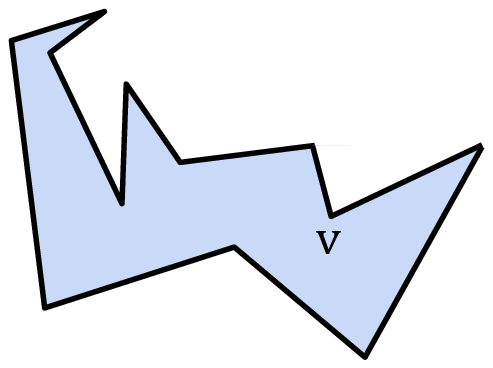}}
\hspace{1cm}
\subfigure[]{\includegraphics[height=2.9cm]{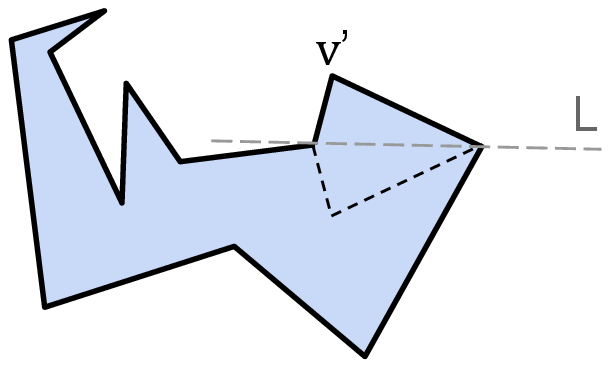}}
\caption{A pop is the reflection of a vertex $v$ through the line $L$ defined by the two vertices adjacent to $v$.}
\label{pop}
\end{center}
\end{figure}

In this paper, we provide a first hint that every nondegenerate polygon can be convexified with pops.
We focus on the simplest polygonal chain --- the four-bar linkage, and in fact on an even simpler system, 
where one bar is assumed to be fixed. Equivalently, we restrict the pop operations to be applied to only two vertices 
among the four (i.e., the two vertices that are not adjacent to the fixed bar). 
This restriction can later be removed as popping two opposite vertices results in the mirror image of the four-bar linkage. 
Because applying a pop twice in a row to the same vertex leaves the linkage unchanged, we are left with the analysis of a single sequence of pops, 
alternating between the two mobile vertices. 
%Given any configuraiton $\calA C$ of a four-bar linkage, let $C(L)$ be the set of all possible configurations that can be reached from $L$ by smooth motion.
Let $\cal C$ and $\cal C'$ be two configurations of the same four-bar linkage.
We say that we can \emph{reach a neighborhood} of $\cal C'$ if for any $\cal C$
and every $\varepsilon > 0$,
there exists a sequence of pops such that, when applied to $\cal C$,
will result in a configuration where every vertex is at distance at most $\varepsilon$ 
to its position in $\cal C'$. 
We believe that even under the strong restriction of fixing one of the bars of a four-bar linkage, the neighborhood of the full set of configurations that are reachable by 
continuous motion (Conjecture \ref{conjecture} below) can be reached by a sequence of pops, as suggested in Figure \ref{simu_four_bars}. 
We prove this under additional assumptions on the numerical values of the lengths of the bars. 
By doing so, we provide a first step towards a complete understanding of the pop operation on general polygons.
%In particular, we are interested in the configuration space of the four-bar linkage
%and whether this space is the same as when smooth motion is applied.
%show that a four-bar framework can be transformed into any configuration that is reachable by smooth motion.
\begin{figure}[h]
\begin{center}
\includegraphics[width=6cm,angle=-1.3,origin=c]{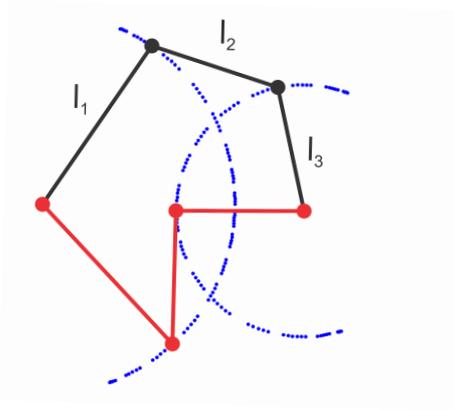}
\caption{A typical behavior of the pop iteration on a four-bar linkage with a fixed bar (not shown in this figure). 
All the configurations seem to be reached by successive pops of a pair of vertices. 
The black chain (upper hull) is the original configuration of the three bars, 
while the red one below is the chain after 166 pops.}
\label{simu_four_bars}
\end{center}
\end{figure}

\begin{conjecture}
\label{conjecture}
For almost every four-bar linkage, 
the neighborhood of any configuration that can be reached by smooth motion, 
can also be reached by 
%the neighborhood of any configuration that can be reached by smooth motion can also be reached by 
a sequence of alternating pops of two adjacent vertices.
\end{conjecture}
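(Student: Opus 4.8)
The plan is to reduce the alternating pop sequence to a single rotation of a circle, so I begin by setting up coordinates adapted to the fixed bar. Placing the fixed bar with endpoints $O_1=(0,0)$ and $O_2=(g,0)$, the two mobile vertices are $P=O_1+r_1(\cos\alpha,\sin\alpha)$ and $Q=O_2+r_2(\cos\beta,\sin\beta)$, where $r_1,r_2$ are the lengths of the two moving bars attached to the ground and $\alpha,\beta$ are the angles they make with the horizontal. The only remaining rigidity constraint is $|P-Q|=\ell$ for the coupler bar, which after expansion reads
\[
 r_1^2+r_2^2+g^2-2r_1r_2\cos(\alpha-\beta)-2gr_1\cos\alpha+2gr_2\cos\beta=\ell^2 .
\]
This cuts out a one-dimensional real-analytic curve $\mathcal{M}\subset\mathbb{T}^2$ in the torus of angles $(\alpha,\beta)$; substituting $t_1=\tan(\alpha/2)$ and $t_2=\tan(\beta/2)$ turns the equation into a biquadratic polynomial relation, so $\mathcal{M}$ is the real locus of a (generically smooth) elliptic curve $E$. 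Each connected component of $\mathcal{M}$ is a topological circle, and the configurations reachable from $\mathcal{C}$ by \emph{smooth} motion are exactly those on the connected component of $\mathcal{C}$.

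Next I would describe the two pops as involutions of $\mathcal{M}$. Popping $P$ reflects it across the line $O_1Q$; this fixes $O_1,Q,O_2$, hence fixes $\beta$, and sends $P$ to the second intersection of the circle of radius $r_1$ about $O_1$ with the circle of radius $\ell$ about $Q$, i.e.\ it exchanges the two roots $t_1$ of the biquadratic at fixed $t_2$. This is a birational involution $\sigma_P$ of $E$; symmetrically, popping $Q$ gives an involution $\sigma_Q$ exchanging the two roots $t_2$ at fixed $t_1$. The alternating pop sequence is the iteration of $T=\sigma_Q\circ\sigma_P$. On an elliptic curve each such root-exchanging involution is a reflection $x\mapsto c_i-x$ in the group law, so their composition $T$ is a \emph{translation} $x\mapsto x+\tau$ with $\tau=c_Q-c_P$. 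Restricted to the real oval through $\mathcal{C}$ (identified with $\mathbb{R}/\mathbb{Z}$ via the real period of $E$), $T$ is therefore an \emph{exact rotation} by a number $\rho=\rho(r_1,r_2,\ell,g)$ --- this is precisely the \guillemets{map on the circle} of the abstract. If the real locus has two ovals $T$ may swap them, in which case one argues with $T^2$, an exact rotation by $2\rho$ on each oval.

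Then I would invoke the classical dynamics of rotations. When $\rho$ is irrational, Weyl's equidistribution theorem makes every orbit of $T$ dense, indeed equidistributed, on the oval; since the oval is exactly the set of configurations reachable from $\mathcal{C}$ by smooth motion, the pop orbit of $\mathcal{C}$ comes within any $\varepsilon$ of any such target $\mathcal{C}'$, which is exactly the statement that a neighborhood of $\mathcal{C}'$ is reached by pops. Everything thus reduces to showing that $\rho$ is irrational for almost every choice of bar lengths. Here the elliptic structure is essential: because $T$ is an \emph{exact} rotation rather than a generic circle diffeomorphism, there are no mode-locking plateaus (no Arnold tongues), and $\rho$ is rational exactly when $\tau$ is a rational multiple of the real period. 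Consequently $\rho$ is a real-analytic function of $(r_1,r_2,\ell,g)$ on the open domain of movable, nondegenerate linkages, and $\{\,(r_1,r_2,\ell,g):\rho\in\mathbb{Q}\,\}=\bigcup_{q\in\mathbb{Q}}\rho^{-1}(q)$ is a countable union of proper real-analytic subsets --- hence of measure zero --- provided $\rho$ is non-constant.

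The main obstacle is exactly this last clause: proving that the rotation number $\rho$ is a non-constant (and suitably regular) function of the bar lengths across the whole parameter domain. Non-constancy cannot come from an Arnold-tongue picture, since that is ruled out by the exact-rotation structure, so it must be established directly --- for instance by computing $\rho$ in two accessible regimes and exhibiting different values, by showing $\partial\rho/\partial\ell\neq0$ at some point, or via a degeneration of $E$ in which the real period and $\tau$ can be estimated. This is the step I expect to be genuinely hard, and it is presumably why the general statement remains a conjecture: the companion theorem establishes non-constancy, and hence irrationality of $\rho$, only under explicit numerical conditions on the lengths, leaving the almost-everywhere version open.
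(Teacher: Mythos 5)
You were asked to prove what is, in the paper, an open \emph{conjecture}: the paper itself establishes it only under extra hypotheses (Theorem~\ref{main_theorem}: the non-Grashof case $T_1>0$, $T_2>0$, $T_3<0$ together with the length condition \eqref{cond_main_theorem}), and your proposal likewise stops short of a proof, as you yourself say. The gap is concrete: everything in your argument funnels into the claim that the rotation number $\rho$ is a non-constant (real-analytic) function of the bar lengths, and that claim is exactly the hard core of the problem, not a loose end. Your observation that the exact-rotation structure excludes Arnold tongues does not help here: the paper's own example of equal bars $l_1=l_2=l_3$ gives a whole one-parameter family of admissible $L$ (namely $l<L<3l$) on which the composed pop map is $3$-periodic, so $\rho\equiv 1/3$ identically in $L$ even though the elliptic/translation structure is fully present. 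Thus $\rho$ genuinely can be constant and rational along nontrivial subfamilies, and without a non-constancy argument the set $\{\rho\in\mathbb{Q}\}$ is not yet known to be null; the measure-zero conclusion, and with it the whole proof, collapses at this point. (Your intermediate assertions --- genericity of smoothness of the biquadratic, real-analyticity of $\rho$ across the parameter domain, the two-oval bookkeeping --- also need arguments, but they are believable; the non-constancy is the real missing idea.)

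That said, your reduction is sound as a reduction and goes by a genuinely different route than the paper's partial proof, essentially the route of the unpublished manuscript \cite{Izmestiev} that the paper cites as complementary: the configuration curve is a biquadratic in $(\tan(\alpha/2),\tan(\beta/2))$, hence generically elliptic; the two pops are the root-switching involutions $x\mapsto c_i-x$ in the group law; their composition is a translation, hence an exact rotation on each real oval. The paper instead works analytically with the circle map: polar-type coordinates $(L,\phi)$, orientation preservation via Jacobian determinants (Lemma~\ref{lem_orient}), a nonsingular invariant measure which both conjugates $f_L$ to a rotation and shows that rational $\rho$ forces \emph{all} orbits to be periodic (Lemma~\ref{lem_all_orbits}), and Denjoy's theorem. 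Where you stall, the paper gets through --- at the price of condition \eqref{cond_main_theorem} --- by a mechanism your setup lacks: monotonicity of $f(L,\phi)$ in $L$ (Lemma~\ref{lam_f_monot}) implies $L\mapsto\rho(L)$ takes each rational value at most once, and then continuity, strict monotonicity, and Lusin's condition on $\rho^{-1}$ give that $\{L:\rho(L)\in\mathbb{Q}\}$ has measure zero. If you could actually carry out your suggested non-constancy argument (say via a degeneration of $E$ where $\tau$ and the real period can be compared), your approach would be stronger than the paper's theorem, since it would need no length condition; as written, both arguments face the same cliff, and the paper's extra hypotheses are precisely the toll it pays to get past it.
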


It is easily seen that, when the bars have the same length, the four-bar linkage recovers its initial configuration after 6 pops. This shows that there exist cases where almost all configurations cannot be reached by a sequence of pops. Conjecture \ref{conjecture} implies that these situations are not generic: if we consider that a four-bar linkage is described by four parameters in $\mathbb{R}^4_{>0}$ (i.e. the lengths of the bars), these situations correspond to a zero measure set of parameters in $\mathbb{R}^4_{>0}$. Note also that a four-bar linkage (with a fixed bar) has only one degree of freedom, so that feasible configurations that can be reached by smooth motion are represented by a one-dimensional manifold. Conjecture \ref{conjecture} states that an infinite sequence of pops densely fills this one-dimensional manifold of feasible configurations, that is, for any small value $\epsilon > 0$ and from any configuration, 
one can reach the $\epsilon$-neighborhood of any other configuration with a sequence of pops.

Our key idea is to focus on the properties of the two-dimensional map describing the dynamics of four-bar linkages. The one-dimensional manifold of feasible configurations (which depends on the particular linkage, via the four parameters) is invariant under this map. We show that the map restricted to this manifold is topologically equivalent to an orientation-preserving map of the circle. We then prove that the rotation number of this map is irrational for almost all admissible sets of parameters, so that an orbit of the map is dense in the invariant set.

After the submission of this paper, the authors became aware of a recent unpublished manuscript studying periodicity properties of the four bar linkage \cite{Izmestiev}. The results of that paper might be applied to Conjecture \ref{conjecture}, thereby complementing our present study.

The rest of the paper is organized as follows. In Section~\ref{4bars}, we present and discuss the main result. The two-dimensional map describing the four-bar linkage is derived in Section~\ref{2D map} and the main result is proved in Section~\ref{orbits}. Finally, concluding remarks are given in Section~\ref{conclu}.

%\cite{The Erdos–Nagy theorem and its ramifications}
%%%%%%%%%%%%%%%%%%%
%give a positive answer to this conjecture.

\section{Four-bar linkage}
\label{4bars}

As mentioned above, in this paper, we consider a four-bar linkage that is composed of three consecutive bars numbered $1$, $2$, and $3$ (called respectively \emph{input link}, \emph{floating link}, and \emph{output link}), and a fourth bar that is held horizontally fixed (called ground link). The four-bar linkage is shown in Figure \ref{4bars_pops}(a). We denote the lengths of bars 1, 2, and 3 by $l_1$, $l_2$ and $l_3$, respectively, and the length of the fourth fixed bar by $L$. Note that we must have
\begin{equation}
\label{bounds}
\textrm{max}\{0,l_1-l_2-l_3,-l_1+l_2-l_3,-l_1-l_2+l_3\} < L < l_1+l_2+l_3\,.
\end{equation}
We further assume that the bars are allowed to intersect.
\begin{figure}[hb]
\begin{center}
\subfigure[]{\includegraphics[width=3cm]{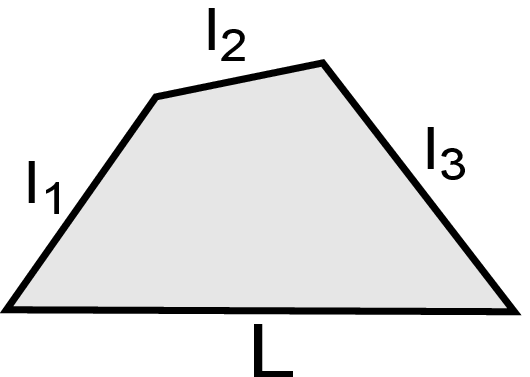}}
\hspace{1cm}
\subfigure[]{\includegraphics[width=3cm]{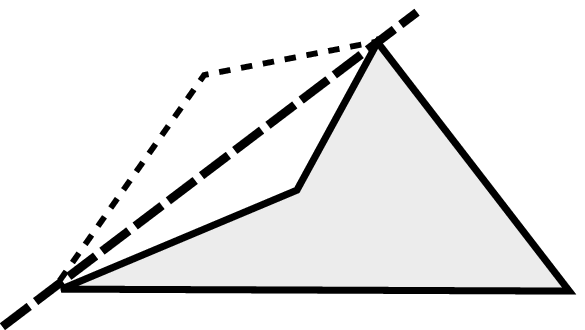}}
\hspace{1cm}
\subfigure[]{\includegraphics[width=3cm]{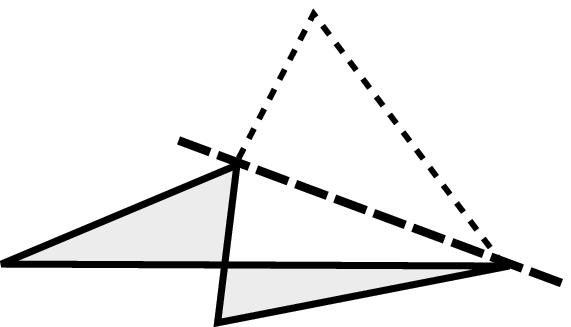}}
\caption{(a) A four-bar linkage. (b) Its configuration after popping bars $1$--$2$. 
(c) Configuration after popping bars $2$--$3$.} 
\label{4bars_pops}
\end{center}
\end{figure}

\subsection{Smooth motion and feasible configurations}

Depending on the length of the bars, the four-bars linkage exhibits different types of smooth motion, and is characterized by different types of feasible configurations. The input and output links either fully rotate with respect to the fixed bar by $2\pi$ (i.e. \emph{crank} motion) or move only in a limited range of angles (i.e. \emph{rocker} motion). 
It is known \cite{mccarthy-06} that the type of movement is determined by the sign of the terms
\begin{eqnarray*}
T_1 & \triangleq & -l_1+l_2-l_3+L \,, \\
T_2 & \triangleq & -l_1-l_2+l_3+L \,, \\
T_3 & \triangleq & -l_1+l_2+l_3-L \,. 
\end{eqnarray*}
It follows that eight different situations can be observed, each of which corresponding to a specific combination of the signs of $T_1$, $T_2$, and $T_3$. 
In four cases, the so-called Grashof condition $T_1T_2T_3>0$ is satisfied and 
not all feasible configurations can be reached by smooth motion, that is, the configuration space is disconnected. 
In the other cases (with $T_1T_2T_3<0$), the configuration space is connected and all configurations can be reached by smooth motion.

\subsection{Motion induced by pops}

The pop operation is applied on two vertices of the framework. In what follows, we consider that these two vertices do not belong to the ground (fixed) link, so that a pop will move either the bar pair $1$-$2$, or the pair $2$-$3$. A sequence of pops alternates between popping these two pairs (see Figure \ref{4bars_pops}).
%A chain is made of three bars (denoted by 1, 2, and 3) attached in a row, and has its two extremities that are fixed. 
%The configuration of the chain can be modified by the \guillemets{pop} of two adjacent bars (1-2 or 2-3), i.e. an axial symmetry with respect to the line passing through the two extremities of the two bars. It is conjectured that one can obtain any possible configuration of the chain by popping iteratively the bars 1-2 and the bars 2-3.
%We then show that under some conditions on the lengths of the bars, any configuration of the chain obtained by a smooth motion can also be obtained by popping iteratively bars $1$-$2$ and $2$-$3$.

Our main result is a proof of Conjecture \ref{conjecture} in the (non-Grashof) situation $T_1>0$, $T_2>0$, and $T_3<0$ ($0\pi$ \emph{double-rocker}, see \cite{mccarthy-06}).
\begin{theorem}
\label{main_theorem}
For almost all four-bar linkages that satisfy the conditions $T_1>0$, $T_2>0$, $T_3<0$, and
\begin{equation}
\label{cond_main_theorem}
l_2\leq \min\{l_1,l_3\} \textrm{ or } l_2\geq \max\{l_1,l_3\}\,,
\end{equation}
the neighborhood of any configuration that can be reached by smooth motion can be reached by a sequence of pops by moving the bars $1$-$2$ and $2$-$3$.
\end{theorem}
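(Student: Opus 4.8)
The plan is to reduce the theorem to a statement about the rotation number of a circle map, following the roadmap sketched in the introduction. First I would set up coordinates: place the fixed ground link along a horizontal segment of length $L$, and parametrize the configuration by the angles $\theta_1$ and $\theta_3$ that the input link and output link make with the ground link. Because the floating link has fixed length $l_2$, the pair $(\theta_1,\theta_3)$ is constrained to lie on a closed curve $\mathcal{M}$ in the torus $[0,2\pi)^2$; this is the one-dimensional manifold of feasible configurations mentioned in the introduction, and the signs $T_1>0$, $T_2>0$, $T_3<0$ guarantee (via the non-Grashof analysis) that $\mathcal{M}$ is a single connected loop so that smooth motion reaches every feasible configuration. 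The next step is to describe analytically the effect of a pop on this parametrization: popping the pair $1$--$2$ fixes the positions of three vertices and reflects the fourth across the line through its neighbors, which in these coordinates sends a feasible point to another feasible point. I would verify that the alternating composition (pop $1$--$2$ followed by pop $2$--$3$) is a well-defined self-map $P$ of $\mathcal{M}$, and that it is an orientation-preserving homeomorphism — this is where I would invoke the two-dimensional map derived in Section~\ref{2D map} and its invariance of $\mathcal{M}$.

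Once $P:\mathcal{M}\to\mathcal{M}$ is established as an orientation-preserving homeomorphism of a topological circle, Poincar\'e's theory of circle maps applies: $P$ has a well-defined rotation number $\rho(P)$, and the classical dichotomy states that if $\rho(P)$ is irrational then every orbit of $P$ is dense in $\mathcal{M}$. Density of a single orbit is exactly the statement that from any configuration one can reach the $\varepsilon$-neighborhood of any other, which is precisely Conjecture~\ref{conjecture} restricted to this case. So the entire theorem comes down to showing that $\rho(P)$ is irrational for almost all admissible parameter tuples $(l_1,l_2,l_3,L)$ satisfying the hypotheses. To this end I would express $\rho(P)$ explicitly, or at least identify the parameter values for which $P$ has a periodic orbit (rational rotation number). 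A periodic orbit corresponds to the equation $P^n=\mathrm{id}$ having a solution, which is an algebraic condition on the four lengths; the set of parameters for which $\rho(P)=p/q$ for some fixed rational is then the zero set of a nontrivial analytic function, hence of measure zero. Taking the countable union over all rationals still gives a measure-zero set, so irrationality holds for almost all parameters.

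The structural reason a pop map behaves like a rotation is worth isolating as a separate step, and I would handle it by finding a conjugacy or a convenient angular coordinate in which $P$ becomes explicit. A natural candidate is to use the fact that reflecting a vertex across the line through its neighbors preserves the two incident bar lengths and acts, on the angle subtended at the opposite diagonal, by a reflection; composing two such reflections (one for each mobile vertex) should produce a rotation-like map on the relevant angular variable. The condition~(\ref{cond_main_theorem}), namely $l_2\le\min\{l_1,l_3\}$ or $l_2\ge\max\{l_1,l_3\}$, is presumably what guarantees that this composition is globally a homeomorphism of $\mathcal{M}$ without fixed boundary obstructions or folding, so I would check exactly where that inequality is used in proving $P$ is an orientation-preserving bijection.

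The hard part will be two-fold. The first difficulty is proving rigorously that $P$ is an orientation-preserving homeomorphism of $\mathcal{M}$ under hypothesis~(\ref{cond_main_theorem}) — in particular that pops always map feasible configurations to feasible configurations and never leave $\mathcal{M}$ or reverse orientation; degenerate configurations (where the line through the two neighbors is undefined or the reflected point escapes the feasible loop) must be ruled out, and this is exactly where the equal-length case, which is periodic after $6$ pops, shows the analysis is delicate. The second and more serious difficulty is the measure-theoretic argument that $\rho(P)$ is irrational for almost all parameters: since the rotation number depends continuously but not analytically on the map in general, I expect the crux is to show that $\rho(P)$ is a nonconstant analytic (or at least real-analytic) function of the parameters, so that each level set $\{\rho=p/q\}$ is nowhere dense of measure zero. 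Establishing nonconstancy — exhibiting at least one parameter tuple with irrational rotation number, together with analyticity away from mode-locking intervals — is where I anticipate the main effort, and it is the step that genuinely uses the circle-map dynamics rather than elementary geometry.
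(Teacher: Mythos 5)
Your reduction to an orientation-preserving circle map and your identification of the goal --- irrational rotation number for almost all parameters --- match the paper's overall strategy, but your proposed proof of the measure-zero claim has a genuine gap, and it is precisely the step that the paper's two key lemmas exist to close. You argue that, for a fixed rational $p/q$, the set $\{\rho(P)=p/q\}$ is the zero set of a nontrivial analytic function of the lengths, because a rational rotation number corresponds to the equation $P^n(\phi)=\phi$ having a solution. But \emph{having a solution in $\phi$} is an existential condition: the bad parameter set is the projection of the zero set of $(\phi,\mathrm{parameters})\mapsto P^n(\phi)-\phi$, not a zero set itself, and such projections generically have nonempty interior. This is exactly the phenomenon of mode-locking (Arnold tongues): in a typical one-parameter family of smooth circle diffeomorphisms, a periodic orbit, once created, is hyperbolic and persists under perturbation, so $\rho$ stays locked at the rational value on a parameter interval of positive measure, and $\rho$ is a devil's staircase rather than an analytic function. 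Your fallback hope that $\rho$ is ``nonconstant real-analytic'' in the parameters is therefore not salvageable by general theory; something special about this particular family must be used.

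The paper supplies two such special facts, neither of which appears in your proposal. First (Lemma \ref{lem_all_orbits}), the map $f_L$ carries an explicit invariant measure $\mu$ built from the Jacobian of the polar-type change of coordinates \eqref{change_var}; consequently $f_L$ is conjugate to a rigid rotation, so a rational rotation number forces $(f_L)^N=\texttt{Id}$ globally, not merely the existence of a periodic point. Second (Lemma \ref{lam_f_monot}), the hypothesis \eqref{cond_main_theorem} --- which you guessed was needed to make $P$ an orientation-preserving homeomorphism, whereas orientation preservation (Lemma \ref{lem_orient}) in fact needs only $T_1>0$, $T_2>0$, $T_3<0$ --- is used to prove that $f(L,\phi)$ is monotone in the parameter $L$, strictly so away from finitely many $\phi$. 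Combining the two: if $\rho(L_a)=\rho(L_b)\in\mathbb{Q}$ with $L_a<L_b$, then $(f^{(L_a)})^N=(f^{(L_b)})^N=\texttt{Id}$, while monotonicity in $L$ and orientation preservation force $(f^{(L_a)})^N(\phi)\neq(f^{(L_b)})^N(\phi)$ at suitable $\phi$, a contradiction; hence each rational value of $\rho$ is attained at a single $L$, the bad set of parameters is countable, and the conclusion follows (the paper phrases this last step via continuity, strict monotonicity, and Lusin's condition). A smaller but real error in your sketch: for mere homeomorphisms, an irrational rotation number does not imply dense orbits (Denjoy counterexamples exist); one needs Denjoy's theorem, i.e.\ the verification that $\log(df_L/d\phi)$ has bounded variation, or alternatively the conjugacy to a rotation supplied by the invariant measure.
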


We do not have a clear geometric interpretation of why Condition \eqref{cond_main_theorem} is important for our theorem. It appears that we need it in our proof in order to ensure some monotonicity properties of the system.
We restate Theorem~\ref{main_theorem} more precisely in Theorem~\ref{theo_dense} (Section~\ref{orbits}). The proof is given in Section \ref{subsec:proof_main}.

On one hand, Theorem \ref{main_theorem} proves Conjecture \ref{conjecture} in the restrictive case of a single configuration, but on the other hand it proves a stronger statement, in that the sequence of pops does not involve the ground (fixed) link.  If one relaxes this additional requirement, then Conjecture \ref{conjecture} is also shown to be true in the three other non-Grashof situations. We have the following corollary of Theorem \ref{main_theorem}.
\begin{corollary}
\label{coroll}
For almost all four-bar linkages that satisfy $T_1 T_2 T_3 < 0$ and
\begin{equation}
\label{cond_coroll}
\max\{l_2,L\} \leq \min\{l_1,l_3\} \textrm{ or } \min\{l_2,L\} \geq \max\{l_1,l_3\}\,,
\end{equation}
the neighborhood of any configuration that can be reached by smooth motion can also be reached by a sequence of pops alternating between two particular vertices (chosen among the four pairs of adjacent vertices).
\end{corollary}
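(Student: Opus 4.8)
The plan is to reduce Corollary~\ref{coroll} to Theorem~\ref{main_theorem} by exploiting the freedom in the choice of the ground link. A four-bar linkage is a quadrilateral whose four side lengths appear in the cyclic order $(L,l_1,l_2,l_3)$, and both the pop operation and the set of configurations reachable by smooth motion are intrinsic to the quadrilateral: they do not depend on which of the four bars we decide to hold fixed. Consequently, density of a pop orbit in the one-dimensional space of feasible shapes is a labeling-independent statement, and we are free to \emph{relabel} the linkage by declaring any of its four bars to be the ground link, fixing a ground bar merely pins down the rigid motion. The two vertices popped in Theorem~\ref{main_theorem} are the endpoints of the floating link (the bar opposite the ground), so relabeling changes which pair of adjacent vertices is popped; this is exactly the extra freedom granted in the statement of the corollary, where the popped pair may be chosen among the four pairs of adjacent vertices and may thus involve the ground link.

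First I would record how the three test quantities transform under a cyclic relabeling. Declaring $l_1$ to be the new ground sends the parameters by the coordinate permutation $(l_1,l_2,l_3,L)\mapsto(l_2,l_3,L,l_1)$, and a direct computation gives the rule $\rho:(T_1,T_2,T_3)\mapsto(-T_1,-T_3,T_2)$. Since $\rho$ fixes the product $T_1T_2T_3$, it preserves the non-Grashof condition $T_1T_2T_3<0$; moreover, iterating $\rho$ permutes the four non-Grashof sign patterns in a single $4$-cycle
\[
(+,+,-)\;\xrightarrow{\ \rho\ }\;(-,+,+)\;\xrightarrow{\ \rho\ }\;(+,-,+)\;\xrightarrow{\ \rho\ }\;(-,-,-)\;\xrightarrow{\ \rho\ }\;(+,+,-).
\]
In particular, every non-Grashof linkage can be brought to the pattern $T_1>0$, $T_2>0$, $T_3<0$ of Theorem~\ref{main_theorem} by applying $\rho$ a suitable number $k\in\{0,1,2,3\}$ of times, that is, by choosing the appropriate bar as ground.

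Next I would check that Condition~\eqref{cond_coroll} is precisely what is needed for Theorem~\ref{main_theorem} to apply in the relabeled frame. As $k$ ranges over $0,1,2,3$, the pair $(\text{floating};\{\text{in},\text{out}\})$ runs through $(l_2;\{l_1,l_3\})$, $(l_3;\{l_2,L\})$, $(L;\{l_1,l_3\})$, $(l_1;\{L,l_2\})$. For each of these, Theorem~\ref{main_theorem} requires the floating length to be a minimum or a maximum of its input/output pair, and a short case check shows that each of these four requirements is implied by \eqref{cond_coroll}: indeed \eqref{cond_coroll} says exactly that $\{l_2,L\}$ are the two smallest or the two largest among the four lengths, which forces the required separation in every relabeling. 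Because $\rho$ is a coordinate permutation (hence measure-preserving), the ``almost all'' exceptional null set of Theorem~\ref{main_theorem} pulls back to a null set, so the conclusion holds for almost all linkages satisfying $T_1T_2T_3<0$ and \eqref{cond_coroll}.

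Finally I would assemble the pieces: given an original linkage satisfying the corollary's hypotheses, choose the ground bar so that the relabeled system is a $0\pi$ double-rocker satisfying \eqref{cond_main_theorem}; apply Theorem~\ref{main_theorem} to obtain a dense pop orbit; and transfer this density back to the original shape space, observing that the popped vertices are the endpoints of the relabeled floating link, hence one of the four pairs of adjacent vertices. The main obstacle is not conceptual but lies in the bookkeeping of the last verification: one must confirm that the single symmetric condition \eqref{cond_coroll} simultaneously covers all four relabelings, and that the feasibility bounds~\eqref{bounds} and the connectivity of the configuration space (which holds throughout the non-Grashof regime) are genuinely preserved when the fixed bar is changed.
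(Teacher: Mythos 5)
Your proposal is correct and follows essentially the same route as the paper: relabel the linkage by a cyclic permutation of the bar lengths (i.e.\ choose a different ground link) so that the transformed quantities satisfy $T'_1>0$, $T'_2>0$, $T'_3<0$, verify that Condition~\eqref{cond_coroll} implies Condition~\eqref{cond_main_theorem} in the relabeled frame, and invoke Theorem~\ref{main_theorem}. Your write-up actually fills in details the paper leaves as ``easy to show'' (the explicit transformation rule $(T_1,T_2,T_3)\mapsto(-T_1,-T_3,T_2)$, the $4$-cycle through the non-Grashof sign patterns, and the case check that $\{l_2,L\}$ being the two smallest or two largest lengths covers all four relabelings), all of which are accurate.
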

\begin{proof}
The proof is based on the fact that all non-Grashof situations can be obtained by reassigning the labels of the bars, that is, by choosing for instance another ground (fixed) bar. More precisely, if $T_1 T_2 T_3 < 0$, there exists a cyclic permutation $\sigma$ such that the terms $T'_1$, $T'_2$, $T'_3$ obtained with the values $(l'_1,l'_2,l'_3,L')=\sigma(l_1,l_2,l_3,L)$ satisfy $T'_1>0$, $T'_2>0$, and $T'_3<0$. In addition, it is easy to show that the values $(l'_1,l'_2,l'_3,L')$ satisfy the condition \eqref{cond_main_theorem} if \eqref{cond_coroll} holds. Then the result follows from Theorem \ref{main_theorem} applied to the four-bar linkage with lengths $l'_1$, $l'_2$, $l'_3$, and $L'$.
\end{proof}

Theorem \ref{main_theorem} and Corollary \ref{coroll} partially solve Conjecture \ref{conjecture}, in the non-Grashof case. The conjecture is not solved here in the Grashof situation (i.e. disconnected configuration space). For this case, we have the following result.
\begin{proposition}
\label{prop_non_ergo}
If $T_1<0$, $T_2>0$, and $T_3<0$, then it is not possible to reach the neighborhood of all configurations of a four-bar linkage by a sequence of pops.
\end{proposition}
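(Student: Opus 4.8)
The plan is to exploit the fact that, in the Grashof regime $T_1<0$, $T_2>0$, $T_3<0$ (where $T_1T_2T_3>0$, so the configuration space is disconnected, as recalled above), the set of feasible configurations splits into two pieces that no pop can connect. I parametrize a configuration by the pair of angles $(\theta_1,\theta_3)$ that the input link $O_1A$ and the output link $O_2B$ make with the ground link, where $O_1,O_2$ are the fixed vertices and $A,B$ the two mobile ones. The coupler constraint $|AB|=l_2$ cuts out a closed real curve $\mathcal{C}\subset\mathbb{T}^2$, which in the present case splits into two disjoint components $\mathcal{C}_+$ and $\mathcal{C}_-$, interchanged by the reflection $(\theta_1,\theta_3)\mapsto(-\theta_1,-\theta_3)$. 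Being disjoint and compact, they lie at positive distance $\delta>0$. If every pop maps each component into itself, then any orbit started in $\mathcal{C}_+$ stays in $\mathcal{C}_+$, hence never comes within $\varepsilon<\delta$ of a configuration in $\mathcal{C}_-$, which is exactly the failure of reachability asserted by the proposition.

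First I would describe the two pops as fiber-switching involutions of $\mathcal{C}$. Popping the pair $1$--$2$ reflects $A$ across the line $O_1B$; this fixes $B$ (hence $\theta_3$), and, since reflecting across the line through the two circle centers $O_1$ and $B$ interchanges the two intersection points of the circles of radii $l_1$ and $l_2$, it swaps the two preimages of $\theta_3$ under the projection $(\theta_1,\theta_3)\mapsto\theta_3$. Symmetrically, popping the pair $2$--$3$ fixes $\theta_1$ and swaps the two points of $\mathcal{C}$ over a given $\theta_1$. Thus the claim that each pop preserves the components reduces to showing that, for every admissible $\theta_1$ (resp.\ $\theta_3$), the two points of $\mathcal{C}$ in that fiber lie in the same component. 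This holds once the projections of $\mathcal{C}_+$ and $\mathcal{C}_-$ onto the $\theta_1$-axis are disjoint, and likewise for the $\theta_3$-axis: each $\mathcal{C}_\pm$ is connected, so its image under either projection is an interval, and if these intervals do not overlap then a fiber meeting $\mathcal{C}_+$ cannot meet $\mathcal{C}_-$.

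The heart of the proof is therefore the disjointness of these projections, and this is where the sign conditions enter. Feasibility of an input angle amounts to $|l_2-l_3|\le|AO_2|\le l_2+l_3$, with $|AO_2|^2=l_1^2+L^2-2l_1L\cos\theta_1$ increasing in $\theta_1$ on $[0,\pi]$. A short computation shows that $T_3<0$ forces the upper bound to exclude a neighborhood of $\theta_1=\pi$, while $T_1<0$ together with $T_2>0$ (which give $|l_1-L|<l_3-l_2=|l_2-l_3|$) forces the lower bound to exclude a neighborhood of $\theta_1=0$; hence the feasible input angles form two disjoint arcs symmetric about $0$, one carrying $\mathcal{C}_+$ and the other $\mathcal{C}_-$. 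The same argument applied to $|BO_1|^2=L^2+l_3^2+2Ll_3\cos\theta_3$ — using $T_2>0$ near $\theta_3=0$ and the inequality $|L-l_3|<|l_1-l_2|$, which I would derive from $T_1<0$ or $T_3<0$ according to the sign of $L-l_3$ after noting that the hypotheses force $l_1>l_2$ and $l_2<\min\{l_1,l_3,L\}$ — shows the feasible output angles also split into two disjoint arcs. Together with the fiber-switch description, this yields the result. I expect the main obstacle to be precisely this feasibility bookkeeping: one must verify that both exclusion conditions hold simultaneously for \emph{both} angles purely from the signs of $T_1,T_2,T_3$, and that each connected component projects onto a single one of the two arcs rather than straddling both.
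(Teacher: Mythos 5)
Your strategy---exhibit a pop-invariant splitting of the feasible set into two pieces at positive distance---is the same obstruction the paper uses, but your implementation is genuinely different, and your sign bookkeeping is in fact correct: $T_1<0$ and $T_2>0$ do give $|l_1-L|<l_3-l_2$; $T_3<0$ gives $l_1+L>l_2+l_3$; and $T_2>0$ together with ($T_1<0$ or $T_3<0$, according to the sign of $L-l_3$) gives $L+l_3>l_1+l_2$ and $|L-l_3|<l_1-l_2$. So the feasible input and output angles each avoid neighborhoods of $0$ and $\pi$, as you claim, and the worry you raise at the end (whether both exclusions hold simultaneously for both angles) is not where the difficulty lies. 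For comparison, the paper's proof of Proposition \ref{prop_non_ergo} avoids all of this machinery: since a pop moves only one vertex, a pop sequence from the all-above configurations to the all-below ones must pass through a configuration in which one mobile vertex is (weakly) above and the other (strictly) below the ground line, and the paper rules out every such ``transition'' configuration by three triangle-inequality cases according to where bar $2$ crosses the ground line, contradicting $T_3<0$, $T_2>0$, $T_1<0$ respectively.

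The genuine gap is the assertion that $\mathcal{C}$ ``splits into two disjoint components $\mathcal{C}_+$ and $\mathcal{C}_-$, interchanged by the reflection.'' The paper recalls only that the Grashof configuration space is \emph{disconnected}; it states neither that there are exactly two components nor that the mirror reflection swaps them, and you prove neither. Yet this assertion carries the full weight of your reduction: knowing only ``disconnected,'' nothing a priori excludes two distinct components lying over the \emph{same} feasible $\theta_3$-arc but over \emph{opposite} $\theta_1$-arcs (together with their two mirror images, so that the whole picture is still reflection-symmetric). In that scenario a $\theta_3$-fiber could contain one point of each, pop $1$--$2$ would exchange them, and the sign of $\theta_1$ would fail to be pop-invariant---exactly the property you need. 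The fix is short and uses only quantities you already computed: over the interior of each feasible $\theta_1$-arc (resp.\ $\theta_3$-arc) every fiber consists of exactly two transversal intersection points of the relevant circles, which vary continuously and merge at the two endpoints of the arc, where the circles become tangent (this uses the strict monotonicity of $|AO_2|$, resp.\ $|BO_1|$, on a half-period together with your strict inequalities). Hence the part of $\mathcal{C}$ over each arc is a single closed curve; this proves at once that there are exactly two components, that the reflection exchanges them, and---more directly---that the two points of any fiber always lie in a common component, which is all your argument needs. Alternatively, adopting the paper's three-case argument makes the sign of $\theta_1$ manifestly pop-invariant with no appeal to the component structure at all.
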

\begin{proof}
The set of feasible configurations must contain a subset $S_{up}$ of configurations where the bars $1$-$2$-$3$ lie above the fixed bar, and a subset $S_{down}$ of configurations where the bars $1$-$2$-$3$ lie below the fixed bar. If the bars can reach the neighborhood of all possible configurations by iterative pops, then the four-bar linkage in a configuration of $S_{up}$ can reach a configuration of $S_{down}$. This implies that a configuration of transition between $S_{up}$ and $S_{down}$, where bar 1 is above the fixed bar and bar 3 is below the fixed bar (or vice-versa), must be feasible. This particular configuration can be obtained in three ways: (a) bar $2$ lies on the left of the fixed bar (Figure \ref{transition}(a)); (b) bar $2$ lies on the right of the fixed bar (Figure \ref{transition}(b)); (c) bar $2$ intersects the fixed bar (Figure \ref{transition}(c)). In situation (a), it is clear that $l_1+L<l_2+l_3$, which contradicts $T_3<0$. In situation (b), 
we have $l_3+L<l_1+l_2$, which contradicts $T_2>0$. In situation (c), it follows from the triangle inequality that
\begin{eqnarray*}
l_1 & \leq & L_a+l_{2a} \,,\\
l_3 & \leq & L_b+l_{2b} \,.
\end{eqnarray*}
Summing the two inequalities and using $L=L_a+L_b$ and $l_2=l_{2a}+l_{2b}$, we obtain
\begin{equation*}
l_1+l_3 \leq L + l_2\,,
\end{equation*}
which contradicts $T_1<0$. It follows that the transition configuration is not feasible, and the subset $S_{down}$ cannot be reached by pops starting from a configuration of $S_{up}$.
\end{proof}

\begin{figure}[t]
\begin{center}
\subfigure[]{\includegraphics[width=3cm]{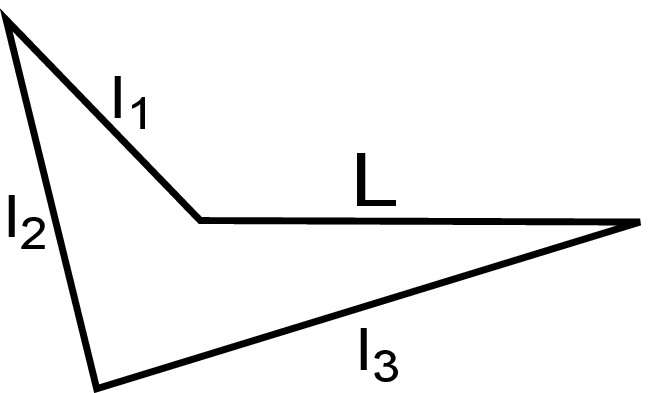}}
\hspace{1cm}
\subfigure[]{\includegraphics[width=3cm]{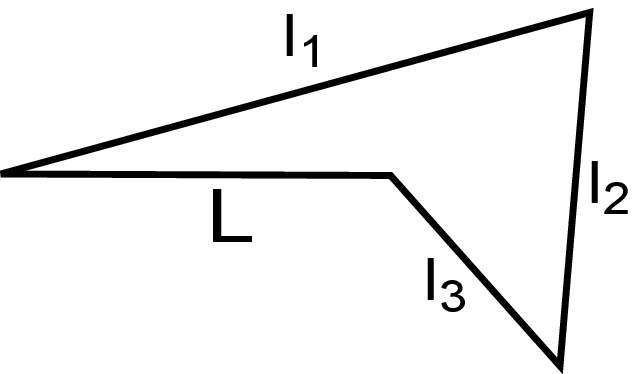}}
\hspace{1cm}
\subfigure[]{\includegraphics[width=3cm]{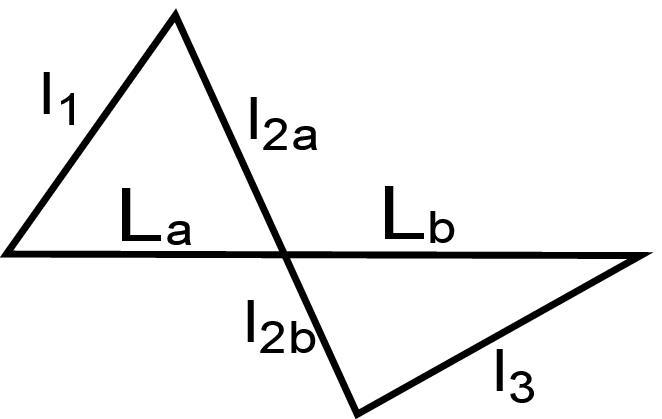}}
\caption{The three cases in the proof of Proposition \ref{prop_non_ergo}, for the possible configurations such that bar $1$ is above the extremities and bar $3$ is below the extremities. 
In (c), we denote $L=L_a+L_b$ and $l_2=l_{2a}+l_{2b}$.}
\label{transition}
\end{center}
\end{figure}
Proposition \ref{prop_non_ergo} is not a counter-example to Conjecture \ref{conjecture}. Although a sequence of pops cannot reach all the configurations of $S_{up} \cup S_{down}$, it might reach all the configurations of either $S_{up}$ or $S_{down}$. These configurations correspond to all the configurations that can be obtained by smooth motion. Note that in the three other situations satisfying the Grashof condition, numerical simulations suggest that the full configuration space can be reached by a sequence of pops, even though it cannot be connected by smooth motion.

\section{Derivation of a two-dimensional map}
\label{2D map}

In this section, we show that the pop operation can be described by a two-dimensional discrete time map.

\subsection{Two-dimensional map}

The four-bar linkage can be described with two angles: the counterclockwise-turning angle $\theta_1\in (-\pi,\pi]$ from bar 1 to bar 2 and the counterclockwise-turning 
angle $\theta_2 \in (-\pi,\pi]$ from bar 2 to bar 3 (see Figure \ref{schema}). 
That is, $\theta_1$ (resp. $\theta_2$) is negative when it is measured clockwise from bar 1 to bar 2 (resp. from bar 2 to bar 3). It is clear that each pair of angles $(\theta_1,\theta_2)$ corresponds to one and only one configuration of the bars.

\begin{figure}[h]
\begin{center}
\includegraphics[width=10cm]{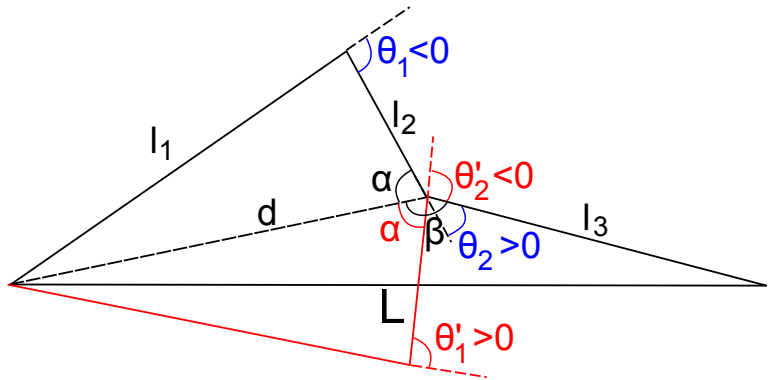}
\caption{The system is described with the two angles $\theta_1$ and $\theta_2$. Through popping bars 1-2 (in red), the angles are modified according to $\theta_1'=-\theta_1$ and $\theta_2'=\theta_2+\textrm{sign}(\theta_1) \, 2 \alpha$.}
\label{schema}
\end{center}
\end{figure}

Popping bars 1-2 produces the new angles
\begin{eqnarray*}
\theta_1' & = & -\theta_1 \\
\theta_2' & = & \langle \theta_2+\textrm{sign}(\theta_1) \, 2 \alpha\rangle
\end{eqnarray*}
with $\alpha \geq 0$ and where we have defined the operation $\langle x\rangle=((x +\pi) \bmod 2\pi) -\pi$, which ensures that $x \in (-\pi,\pi]$. In addition, we have
\begin{eqnarray}
d^2 & = & l_1^2+l_2^2+2 l_1 l_2 \cos \theta_1  \label{equa_d}\\
l_1^2 & = & l_2^2+d^2-2l_2  d \cos \alpha
\end{eqnarray}
so that
\begin{equation}
\label{equa_alpha}
\alpha=\arccos \left(\frac{l_2+l_1 \cos \theta_1}{\sqrt{l_1^2+l_2^2+2 l_1 l_2 \cos\theta_1}}\right)\,.
\end{equation}
It follows that the above equations can be rewritten as
\begin{equation}
\label{syst}
\begin{array}{rcl}
\theta_1' & = & -\theta_1 \\
\theta_2' & = & \displaystyle \left \langle\theta_2+\textrm{sign}(\theta_1) \, 2 \arccos \left(\frac{l_2+l_1 \cos \theta_1}{\sqrt{l_1^2+l_2^2+2 l_1 l_2 \cos\theta_1}}\right) \right \rangle
\end{array}
\triangleq \langle H_{12}(\theta_1,\theta_2) \rangle
\end{equation}
with $H_{12}(\theta_1,\theta_2):(-\pi,\pi] \to \mathbb{R}$. The discrete time map \eqref{syst} describes the change of angles induced by popping  bars 1-2. For bars 2-3, it follows on similar lines that
\begin{equation}
\label{systb}
\begin{array}{rcl}
\theta_1' & = &  \displaystyle \left \langle \theta_1+\textrm{sign}(\theta_2) \, 2 \arccos \left(\frac{l_2+l_1 \cos \theta_2}{\sqrt{l_1^2+l_2^2+2 l_1 l_2 \cos\theta_2}}\right) \right \rangle \\
\theta_2' & = & -\theta_2
\end{array}
\triangleq \langle H_{23}(\theta_1,\theta_2) \rangle
\end{equation}
with $H_{23}:(-\pi,\pi] \to \mathbb{R}$. The alternate pops of bars 1-2 and bars 2-3 are described by the composition $\langle H_{23}\rangle \circ \langle H_{12} \rangle$.

We have the following preliminary result.
\begin{proposition}
If the bars are identical (i.e. $l_1=l_2=l_3$), then they recover the initial configuration after 6 pops.
\end{proposition}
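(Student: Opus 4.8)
The plan is to specialize the general map to the symmetric case $l_1=l_2=l_3=:l$ and to observe that both pop maps then become \emph{linear} modulo $2\pi$, after which the claim reduces to a finite matrix computation. The first step is to simplify the angle $\alpha$ in \eqref{equa_alpha}. Substituting $l_1=l_2=l$ and using the half-angle identity $1+\cos\theta_1=2\cos^2(\theta_1/2)$, I expect the argument of the arccosine to collapse to $\lvert\cos(\theta_1/2)\rvert$. Since $\theta_1\in(-\pi,\pi]$ forces $\theta_1/2\in(-\pi/2,\pi/2]$, the cosine is nonnegative, so $\alpha=\arccos(\cos(\theta_1/2))=\lvert\theta_1\rvert/2$. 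Consequently $\mathrm{sign}(\theta_1)\,2\alpha=\theta_1$, and the map \eqref{syst} for a pop of bars $1$--$2$ simplifies to $\theta_1'=-\theta_1$, $\theta_2'=\langle\theta_1+\theta_2\rangle$. The analogous computation applied to \eqref{systb} gives $\theta_1'=\langle\theta_1+\theta_2\rangle$, $\theta_2'=-\theta_2$ for a pop of bars $2$--$3$.

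Once the maps are linear, the second step is to realize them as integer matrices acting on the torus $(\mathbb{R}/2\pi\mathbb{Z})^2$,
\begin{equation*}
H_{12}=\begin{pmatrix}-1&0\\1&1\end{pmatrix},\qquad H_{23}=\begin{pmatrix}1&1\\0&-1\end{pmatrix}.
\end{equation*}
An alternating sequence of pops is then described by repeated application of $M\triangleq H_{23}H_{12}$, and a direct multiplication gives
\begin{equation*}
M=\begin{pmatrix}0&1\\-1&-1\end{pmatrix},
\end{equation*}
whose characteristic polynomial is $\lambda^2+\lambda+1$, so its eigenvalues are the primitive cube roots of unity. Hence $M^3=I$, which means that three applications of $M$ --- that is, six pops alternating between the two bar pairs --- return every pair $(\theta_1,\theta_2)$ to itself. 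Since a configuration is uniquely determined by $(\theta_1,\theta_2)$, the linkage recovers its initial configuration after six pops.

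The one place requiring care, and the step I would treat as the main (if modest) obstacle, is the interaction between the linear arithmetic and the wrapping operation $\langle\cdot\rangle$. Because $\langle\cdot\rangle$ merely selects the representative of a residue class in $(-\pi,\pi]$, each pop map is genuinely linear on $(\mathbb{R}/2\pi\mathbb{Z})^2$ and composing such maps commutes with reduction modulo $2\pi$; I would verify this explicitly so that the torus identity $M^3=I$ legitimately yields equality of the reduced angles, also checking that the simplification $\mathrm{sign}(\theta_1)\,2\alpha=\theta_1$ remains valid at the boundary values $\theta_1=0$ and $\theta_1=\pi$. As a cross-check that bypasses the matrix formalism, I would track a generic state through all six pops by hand, obtaining the successive pairs $(-\theta_1,\theta_1+\theta_2)$, $(\theta_2,-\theta_1-\theta_2)$, $(-\theta_2,-\theta_1)$, $(-\theta_1-\theta_2,\theta_1)$, $(\theta_1+\theta_2,-\theta_2)$, and finally $(\theta_1,\theta_2)$, confirming the period-six behavior directly.
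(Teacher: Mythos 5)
Your proof is correct and takes essentially the same route as the paper: specialize to $l_1=l_2=l_3$ so that the pop maps become $\langle H_{12}\rangle(\theta_1,\theta_2)=(-\theta_1,\theta_1+\theta_2)$ and $\langle H_{23}\rangle(\theta_1,\theta_2)=(\theta_1+\theta_2,-\theta_2)$, then observe that their composition has period~3. The paper simply asserts this periodicity as ``easy to see,'' whereas you supply the details (the simplification $\alpha=|\theta_1|/2$, the matrix identity $M^3=I$, and the explicit six-step iteration), which is a welcome but not substantively different elaboration.
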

\begin{proof}
If $l_1=l_2=l_3$, we have $\langle H_{12} \rangle(\theta_1,\theta_2) = (-\theta_1,\theta_1+\theta_2)$ and $\langle H_{23} \rangle(\theta_1,\theta_2) = (\theta_1+\theta_2,-\theta_2)$. It is easy to see that $\langle H_{23}\rangle  \circ \langle H_{12} \rangle$ is periodic with period 3.
\end{proof}

This result shows that there exist cases where all configurations cannot be reached by a sequence of pops. Conjecture \ref{conjecture} implies that these particular situations correspond to a zero measure set of parameters.

\subsection{Invariant set}

The two-dimensional maps \eqref{syst} and \eqref{systb} describe the behavior of all four-bar linkages characterized by the lengths $l_1$, $l_2$, and $l_3$. In order to consider the behavior of a unique four-bar linkage, one has to treat the length $L$ of the fixed bar (not used to derive \eqref{syst}-\eqref{systb}) as an additional constraint. In this case the maps \eqref{syst} and \eqref{systb} are restricted to an invariant one-dimensional manifold, which corresponds to the set of all admissible configurations $(\theta_1,\theta_2)$ of the four-bar linkage. We have
\begin{equation*}
L^2=d^2+l_3^2-2 d l_3 \cos \beta
\end{equation*}
and one can verify on Figure \ref{schema} that $\beta=2\pi \pm (\theta_2 +\textrm{sign}(\theta_1) \alpha)$. Using \eqref{equa_alpha}, we obtain
\begin{equation*}
L^2=d^2+l_3^2+2 d l_3 \cos \left[\theta_2 +\textrm{sign}(\theta_1) \arccos \left(\frac{l_2+l_1 \cos \theta_1}{d}\right)\right]
\end{equation*}
or equivalently, using basic trigonometry,
\begin{equation*}
\begin{split}
L^2=d^2 & +l_3^2+ 2 l_2 l_3 \cos \theta_2 + 2 l_1 l_3 \cos \theta_1 \cos \theta_2 \\
& - \textrm{sign}(\theta_1) 2 l_3 \sin \theta_2 \sqrt{d^2-l_2^2-l_1^2 \cos^2 \theta_1-2 l_1 l_2 \cos \theta_1}\,.
\end{split}
\end{equation*}
Finally, it follows from \eqref{equa_d} that
\begin{equation*}
L^2 = l_1^2+l_2^2+l_3^2+2l_1l_2 \cos \theta_1 +2 l_2 l_3 \cos \theta_2+ 2 l_1 l_3 \cos \theta_1 \cos \theta_2 - \textrm{sign}(\theta_1) 2 l_1 l_3 \sin \theta_2 |\sin \theta_1| 
\end{equation*}
or
\begin{equation}
\label{invariant_set}
L^2 = l_1^2+l_2^2+l_3^2 +2l_1l_2 \cos \theta_1 +2 l_2 l_3 \cos \theta_2 + 2 l_1 l_3 \cos (\theta_1+\theta_2) \triangleq \bar{L}^2(\theta_1,\theta_2)\,.
\end{equation}
This equality defines an invariant one-dimensional manifold
\begin{equation*}
\Gamma(L) = \{(\theta_1,\theta_2) \in (-\pi,\pi]^2 \,|\, \bar{L}(\theta_1,\theta_2)=L \}\,
\end{equation*}
and it is easy to see that all the pairs $(\theta_1,\theta_2)$ are feasible; that is, $\theta_1,\theta_2 \in \Gamma(L)$ is a necessary and sufficient condition for the pair of angles to describe a configuration of the four bars for a given value $L$. We note that $\Gamma(L)$ is reduced to a unique point when $L$ is equal to one of the bounds given in \eqref{bounds}: $\Gamma(L)=\{(0,0)\}$ when $L=l_1+l_2+l_3$; $\Gamma(L)=\{(\pi,0)\}$ when $L=l_1-l_2-l_3$; $\Gamma(L)=\{(0,\pi)\}$ when $L=-l_1-l_2+l_3$; and $\Gamma(L)=\{(\pi,\pi)\}$ when $L=-l_1+l_2-l_3$.\\

\section{Dense orbits}
\label{orbits}

The main goal of this section is to prove Theorem \ref{main_theorem}. We rely on the fact that the system is described by an orientation-preserving map on the circle, whose orbits are dense in the invariant set of possible configurations (for almost all sets of parameters).

The property that the $\varepsilon$-neighborhood of any configuration can be reached with pops is equivalent to the property that the orbits of the map \eqref{syst}-\eqref{systb} are dense in the set of admissible configurations. We can then restate Theorem \ref{main_theorem} as follows.
\begin{customthm}{1bis}\label{theo_dense}
For almost all parameters $l_1,l_2,l_3,L$ that satisfy
\begin{eqnarray}
(T_1>0) \qquad L & > & l_1-l_2+l_3 \label{T1}\\
(T_2>0) \qquad L & > & -l_1+l_2+l_3 \label{T2} \\
(T_3<0) \qquad L & > & l_1+l_2-l_3 \label{T3}
\end{eqnarray}
and $l_2\leq \min\{l_1,l_3\}$ or $l_2\geq \max\{l_1,l_3\}$, every orbit of the map $\langle H_{23}\rangle  \circ \langle H_{12} \rangle$ (see \eqref{syst}-\eqref{systb}), with an initial condition $(\theta_1,\theta_2)\in \Gamma(L)$, is dense in $\Gamma(L)$.
\end{customthm}

We postpone the proof of Theorem \ref{theo_dense}, which relies on intermediate results summarized in several lemmas.

%\begin{remark} We note that one of the three conditions (A1)-(A2)-(A3) always implies the two others. For $i\in\{1,2,3\}$ such that $l_i=\min\{l_1,l_2,l_3\}$, it is easy to prove that (A$i$) $\Rightarrow$ (A$j$) for all $j\in\{1,2,3\}$. \hfill
%$\diamond$
%\end{remark}
\begin{remark}[Modulo function]
\label{rem_modulo}
When $L$ satisfies the conditions \eqref{T1}-\eqref{T2}-\eqref{T3}, the maps \eqref{syst} and \eqref{systb} can be defined without the function $\langle \cdot \rangle$, i.e.
\begin{equation}
\label{cond_no_mod}
H_{12}(\Gamma(L)) \subseteq (-\pi,\pi]^2 \qquad H_{23}(\Gamma(L)) \subseteq (-\pi,\pi]^2\,.
\end{equation}
For $(\theta_1,\theta_2) \rightarrow (0,0)$ (i.e. $L=\bar{L}(\theta_1,\theta_2) \rightarrow l_1+l_2+l_3$), we have $H_{12}(\theta_1,\theta_2) \rightarrow  (0,0)$ and $H_{23}(\theta_1,\theta_2)\rightarrow  (0,0)$. In other words, for every $\epsilon>0$, there exists $r>0$ such that $\Gamma(L)\subset B(r)$ for all $l_1+l_2+l_3-\epsilon<L<l_1+l_2+l_3$. Hence, \eqref{cond_no_mod} is satisfied for $\epsilon$ small enough. By continuity of the maps $H_{12}$ and $H_{23}$, it follows that \eqref{cond_no_mod} is satisfied as long as $L$ is large enough so that $\Gamma(L)$ contains no point $(\theta_1^*,\theta_2^*)$ with either $\theta_1^*=\pi$ or $\theta_2^*=\pi$. Since \eqref{invariant_set} implies
\begin{eqnarray}
\label{val_L_pi1}
\bar{L}(\theta_1,\pi) & \leq &\max\{|l_1-l_2+l_3|,|l_1+l_2-l_3|\} \,,\\
\label{val_L_pi2}
\bar{L}(\pi,\theta_2) & \leq &\max\{|-l_1+l_2+l_3|,|l_1-l_2+l_3|\} \,,
\end{eqnarray}
$\Gamma(L)$ does not contain $(\theta_1^*,\theta_2^*)$ if $L$ satisfies \eqref{T1}-\eqref{T2}-\eqref{T3} and \eqref{bounds}. \hfill $\diamond$
\end{remark}

\subsection{Polar-type coordinates}

For given parameters $l_1,l_2,l_3$, consider the set $\Lambda$ of feasible values $L$ that satisfy \eqref{T1}-\eqref{T2}-\eqref{T3}, i.e.
\begin{equation*}
\Lambda = \{ L \in \mathbb{R}^+ | \max\{-l_1+l_2+l_3,l_1-l_2+l_3,l_1+l_2-l_3\} < L < l_1+l_2+l_3 \}
\end{equation*}
and define the set
\begin{equation*}
\Omega = \{ (\theta_1,\theta_2) \in (-\pi,\pi]^2 | \bar{L}(\theta_1,\theta_2) \in \Lambda \} \,.
\end{equation*}
We can introduce a polar-type change of coordinates $g:\Omega \to \Lambda \times (-\pi,\pi]$ yielding the new variables
\begin{equation}
\label{change_var}
(L,\phi) = g(\theta_1,\theta_2) = \left(\bar{L}(\theta_1,\theta_2), \Pi(\theta_1,\theta_2)\right)
\end{equation}
where $\Pi:\mathbb{R}^2 \to (-\pi,\pi]$ is the two-argument atan2 function, i.e. $\Pi(\theta_1,\theta_2)$ is the unique $\phi$ such that $\theta_2/\theta_1=\tan(\phi)$ and such that $|\phi|<\pi/2$ if $\theta_1>0$ and $\pi/2<|\phi|<\pi$ if $\theta_1<0$. Note that through the change of coordinates $g$, the parameter $L$ (which is determined by the angles $(\theta_1,\theta_2)$ for fixed values $l_1,l_2,l_3$) will be considered as a state variable of the two-dimensional system.

The following lemma shows that $g$ is a proper change of variable on $\Omega$.
\begin{lemma}
\label{polar_param}
The map $g:\Omega \to \Lambda \times (-\pi,\pi]$ defined by \eqref{change_var} is bijective (i.e. injective and surjective).
\end{lemma}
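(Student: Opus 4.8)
The plan is to exploit that $g$ is a genuine polar-type coordinate change in which $\Pi$ records the ordinary polar angle of $(\theta_1,\theta_2)$ while $\bar{L}$ plays the role of the radius. Since $\Pi$ sends every point of the open ray in a given direction to one and the same value $\phi$, and sends distinct rays to distinct values, the angular coordinate is already settled: the bijectivity of $g$ is equivalent to showing that, for each fixed $\phi\in(-\pi,\pi]$, the one-variable function $h_\phi(t)=\bar{L}(t\cos\phi,t\sin\phi)$ maps the portion of that ray lying in $\Omega$ bijectively onto $\Lambda$. I would thus reduce everything to a radial analysis.

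For surjectivity I would invoke the intermediate value theorem. At the origin $h_\phi(0)=l_1+l_2+l_3=\sup\Lambda$, and the ray leaves $(-\pi,\pi]^2$ at a point where $\theta_1=\pm\pi$ or $\theta_2=\pm\pi$; by the estimates \eqref{val_L_pi1}--\eqref{val_L_pi2} the value of $\bar{L}$ there is at most $\inf\Lambda$. Continuity then forces $h_\phi$ to attain every value of the open interval $\Lambda$ at least once.

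Injectivity is the heart of the matter and reduces to strict radial monotonicity of $\bar{L}$. Differentiating \eqref{invariant_set} and regrouping the cross term gives
\[
\nabla\bar{L}^2(\theta_1,\theta_2)\cdot(\theta_1,\theta_2) = -2\bigl[\, l_1 l_2\,\theta_1\sin\theta_1 + l_2 l_3\,\theta_2\sin\theta_2 + l_1 l_3\,(\theta_1+\theta_2)\sin(\theta_1+\theta_2)\,\bigr],
\]
so that $h_\phi'(t)$ has the sign of the negative of this bracket (and, incidentally, the Jacobian determinant of $g$ is a positive multiple of the very same bracket). Each summand has the form $x\sin x$, which is nonnegative exactly when $|x|\le\pi$ and strictly positive when $0<|x|<\pi$. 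On $\Omega$, Remark~\ref{rem_modulo} already provides $|\theta_1|,|\theta_2|<\pi$, so the first two summands are nonnegative, and at least one is strictly positive away from the origin; if the third summand is also nonnegative, the bracket is positive, $h_\phi$ is strictly decreasing, and each value of $\Lambda$ is attained exactly once.

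The main obstacle is therefore controlling the last summand $l_1 l_3\,(\theta_1+\theta_2)\sin(\theta_1+\theta_2)$, which turns negative once $|\theta_1+\theta_2|>\pi$; the crux of the proof is the claim that $|\theta_1+\theta_2|<\pi$ holds throughout $\Omega$. I would establish this in the same spirit as Remark~\ref{rem_modulo}: substituting $\theta_1+\theta_2=\pm\pi$ into \eqref{invariant_set} yields $\bar{L}\le |l_1-l_3|+l_2=\max\{l_1+l_2-l_3,\,-l_1+l_2+l_3\}\le\inf\Lambda$, so the two lines $\theta_1+\theta_2=\pm\pi$ miss $\Omega$. Combined with the boundary estimates on $\theta_i=\pm\pi$, this shows that the open convex hexagon $U=\{\,|\theta_1|<\pi,\ |\theta_2|<\pi,\ |\theta_1+\theta_2|<\pi\,\}$ satisfies $\bar{L}\le\inf\Lambda$ on all of $\partial U$. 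Since $U$ is star-shaped about the origin and $\bar{L}$ is strictly radially decreasing on $U\setminus\{0\}$ by the computation above, the connected component of $\{\bar{L}>\inf\Lambda\}$ containing the origin is trapped inside $U$; a short argument then rules out any further component, giving $\Omega\subset U$ and closing both the surjectivity and injectivity claims.
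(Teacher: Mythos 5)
Your proof shares the paper's overall skeleton: surjectivity by the intermediate value theorem along the ray of direction $\phi$, using the endpoint estimates \eqref{val_L_pi1}--\eqref{val_L_pi2}, and injectivity reduced to strict monotonicity of $\gamma\mapsto\bar{L}^2(\gamma\theta_1,\gamma\theta_2)$, whose radial derivative is the same expression as the paper's \eqref{inequa_param}. In both arguments everything hinges on showing $|\theta_1+\theta_2|<\pi$ throughout $\Omega$, and this is exactly where your proposal has a genuine gap. Your boundary computation is correct: on the lines $\theta_1+\theta_2=\pm\pi$ one gets $\bar{L}\leq|l_1-l_3|+l_2\leq\inf\Lambda$, so these lines, like $\theta_i=\pm\pi$, miss $\Omega$. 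But this only traps the connected component of $\{\bar{L}>\inf\Lambda\}$ containing the origin inside the hexagon $U$. The statement you actually need---that $\{\bar{L}>\inf\Lambda\}$ has no points at all in the two corner triangles $T_\pm=\{(\theta_1,\theta_2)\in(-\pi,\pi]^2 : |\theta_1+\theta_2|>\pi\}$---is precisely what you defer to ``a short argument,'' and none is given. It cannot be skipped: if $\Omega$ had a point $p\in T_+$, then on the ray through $p$ your own IVT step produces a second point of $\Omega$ inside $U$ with the same value of $\bar{L}$, so $g$ would fail to be injective; and nothing you wrote controls $\bar{L}$ in the interiors of $T_\pm$, since there the term $(\theta_1+\theta_2)\sin(\theta_1+\theta_2)$ has the wrong sign and radial monotonicity is unavailable, while a boundary estimate by itself does not exclude an interior maximum of $\bar{L}$ exceeding $\inf\Lambda$.

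The paper closes this crux by a direct pointwise argument with no connectivity considerations: assuming $\bar{L}(\theta_1,\theta_2)=L\in\Lambda$ and splitting into cases according to $\min\{l_1,l_2,l_3\}$, the corresponding lower bound on $L$ yields $\cos\theta_1+\cos\theta_2+\cos(\theta_1+\theta_2)+1>0$, i.e.\ \eqref{inequa1}, which is then shown to be incompatible with $|\theta_1+\theta_2|\geq\pi$. Your route can be repaired, but it needs an actual argument for the triangles; for instance, writing $\bar{L}^2=|l_1+l_2e^{i\theta_1}+l_3e^{i(\theta_1+\theta_2)}|^2$ one checks that the only critical points of $\bar{L}^2$ with $\bar{L}>0$ are the four aligned configurations $(\theta_1,\theta_2)\in\{0,\pi\}^2$, none of which is interior to $T_\pm$, so $\max_{\overline{T_\pm}}\bar{L}$ is attained on $\partial T_\pm$, where your estimates give $\bar{L}\leq\inf\Lambda$. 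Without some such step the proof is incomplete at its central point. (A minor side remark: by \eqref{det_J_g}, $\det(J_g)$ is a negative, not positive, multiple of your bracket; this does not affect the argument.)
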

\begin{proof}
\emph{Surjectivity.} Consider a pair $(L,\phi) \in \Lambda \times (-\pi,\pi]$ (i.e. $L$ satisfies \eqref{T1}-\eqref{T2}-\eqref{T3}). We show that there exists $(\theta_1,\theta_2)\in \Omega$ such that $g(\theta_1,\theta_2)=(L,\phi)$.\\
The equality $\phi=\Pi(\theta_1,\theta_2)$ implies that $(\theta_1,\theta_2) \in \{ (\gamma\theta_1^0,\gamma\theta_2^0)|\gamma>0\}$ for some $(\theta_1^0,\theta_2^0)\in (-\pi,\pi]^2$. In addition, we have
\begin{equation*}
\bar{L}(\gamma\theta_1^0,\gamma\theta_2^0)=l_1+l_2+l_2
\end{equation*}
for $\gamma=0$ and, according to \eqref{val_L_pi1}-\eqref{val_L_pi2},
\begin{equation*}
\bar{L}(\gamma\theta_1^0,\gamma\theta_2^0)\leq \max\{|-l_1+l_2+l_3|,|l_1-l_2+l_3|,|l_1+l_2-l_3|\}
\end{equation*}
for $\gamma=\min\{\pm \pi/\theta_1^0,\pm \pi/\theta_2^0\}$. Since the function $\gamma \mapsto \bar{L}(\gamma\theta_1^0,\gamma\theta_2^0)$ is continuous and since
\begin{equation*}
\max\{|-l_1+l_2+l_3|,|l_1-l_2+l_3|,|l_1+l_2-l_3|\} < L < l_1+l_2+l_2\,,
\end{equation*}
there exists a pair $(\theta_1,\theta_2)=(\gamma\theta_1^0,\gamma\theta_2^0)$, with $\gamma \in(0,\min\{\pm \pi/\theta_1^0,\pm \pi/\theta_2^0\})$, such that $\bar{L}(\theta_1,\theta_2)=L$.\\
\emph{Injectivity.} Consider $(L_a,\phi_a)=g(\theta_{1a},\theta_{2a})$ and $(L_b,\phi_b)=g(\theta_{1b},\theta_{2b})$, and assume that $(L_a,\phi_a)=(L_b,\phi_b)$. We will show that $(\theta_{1b},\theta_{2b})=(\theta_{1a},\theta_{2a})$. It follows from $\phi_a=\phi_b$ that $(\theta_{1b},\theta_{2b})=\bar{\gamma}(\theta_{1a},\theta_{2a})$, for some $\bar{\gamma}>0$. In addition, we have
\begin{equation}
\label{equa_int}
0=L_b^2-L_a^2=\int_1^{\bar{\gamma}} \frac{d \bar{L}^2}{d\gamma} (\gamma \theta_{1a},\gamma \theta_{2a}) d \gamma \,.
\end{equation}
Next, we will prove that the integrand satisfies
\begin{equation}
\label{inequa_param}
\begin{split}
\frac{d \bar{L}^2}{d\gamma} (\gamma \theta_1,\gamma \theta_2) = & -2 l_1 l_2 \gamma \theta_1 \sin (\gamma \theta_1)- 2 l_2 l_3 \gamma \theta_2 \sin (\gamma \theta_2)  \\
& \qquad -2 l_1 l_3 \gamma (\theta_1+\theta_2) \sin (\gamma(\theta_1+\theta_2)) \leq 0\,,
\end{split}
\end{equation}
where the equality holds only for $(\gamma \theta_1,\gamma \theta_2) \in \{(0,0),(0,\pi),(\pi,0),(\pi,\pi)\}$. If \eqref{inequa_param} holds, \eqref{equa_int} implies $\bar{\gamma}=1$ and therefore $(\theta_{1b},\theta_{2b})=(\theta_{1a},\theta_{2a})$, which implies injectivity.\\
The first two terms of \eqref{inequa_param} are negative for all $(\gamma \theta_1,\gamma \theta_2)\in (-\pi,\pi]$. Thus, it is sufficient to show that $|\gamma \theta_1+\gamma \theta_2| < \pi$, which we do now.\\
Let us first suppose that $\min\{l_1,l_2,l_3\}=l_1$ (the other cases are similar, as we argue below). Condition \eqref{T1} and \eqref{bounds} imply $L^2 > (-l_1+l_2+l_3)^2$ and it follows from \eqref{invariant_set} that
\begin{equation*}
l_1 l_2 (\cos \theta_1+1) + l_2 l_3 (\cos \theta_2-1) +l_1 l_3 (\cos (\theta_1+\theta_2)+1) > 0
\end{equation*}
for all $(\theta_1,\theta_2) \in \Gamma(L)$. Since $l_1 l_2\leq l_2 l_3$ and $l_1 l_3 \leq l_2 l_3$, we have
\begin{equation*}
\cos \theta_1 + \cos \theta_2 + \cos (\theta_1+\theta_2) + 1 > 0
\end{equation*}
or equivalently
\begin{equation}
\label{inequa1}
\cos \left(\frac{\theta_1+\theta_2}{2}\right) \left(\cos \left(\frac{\theta_1-\theta_2}{2}\right) + \cos \left(\frac{\theta_1+\theta_2}{2}\right)\right) > 0 \,.
\end{equation}
In the cases $\min\{l_1,l_2,l_3\}=l_2$ and $\min\{l_1,l_2,l_3\}=l_3$, \eqref{T3} and \eqref{T2}, respectively, lead to the same inequality \eqref{inequa1}.\\
Assume that $|\theta_1+\theta_2| \geq \pi$. This implies $|\theta_1-\theta_2| \leq \pi$ for all $(\theta_1,\theta_2) \in (-\pi,\pi]$ and it follows from $\theta_1 \leq \pi$ that $(\theta_1-\theta_2)/2 \leq \pi-(\theta_1+\theta_2)/2$. These inequalities yield the conditions
\begin{equation*}
\cos \left(\frac{\theta_1+\theta_2}{2}\right) \leq 0 \qquad \cos \left(\frac{\theta_1-\theta_2}{2}\right) \leq 0 \qquad  \left| \cos \left(\frac{\theta_1+\theta_2}{2}\right) \right| \leq \left| \cos \left(\frac{\theta_1-\theta_2}{2}\right) \right|\,.
\end{equation*}
This contradicts \eqref{inequa1}, so that $|\theta_1+\theta_2| < \pi$. Since $\bar{L}(\gamma \theta_1,\gamma \theta_2) \in \Lambda$ satisfies \eqref{T1}-\eqref{T2}-\eqref{T3} for all $\min\{1,\bar{\gamma}\} \leq \gamma \leq \max\{1,\bar{\gamma}\}$, we have $|\gamma \theta_1+\gamma \theta_2| < \pi$ in \eqref{inequa_param}.
\end{proof}

Lemma \ref{polar_param} implies that we can describe the system \eqref{syst}-\eqref{systb} in $(L,\phi)$ coordinates. We obtain the map $\tilde{H}_{12}=g \circ H_{12} \circ g^{-1}$, which is given by
\begin{equation}
\label{map_circle}
\begin{array}{rcl}
L' & = &  L \\
\phi' & = & \Pi \left(-\theta_1, \theta_2+\textrm{sign}(\theta_1) \, 2 \arccos \left(\frac{l_2+l_1 \cos \theta_1}{\sqrt{l_1^2+l_2^2+2 l_1 l_2 \cos\theta_1}}\right) \right) \triangleq f_{12}(L,\phi)
\end{array}
\end{equation}
with $(\theta_1,\theta_2)=g^{-1}(L,\phi)$. Similarly, $\tilde{H}_{23}=g \circ H_{23} \circ g^{-1}$ is given by
\begin{equation}
\label{map_circleb}
\begin{array}{rcl}
L' & = &  L \\
\phi' & = & \Pi \left(\theta_1+\textrm{sign}(\theta_2) \, 2 \arccos \left(\frac{l_2+l_1 \cos \theta_2}{\sqrt{l_1^2+l_2^2+2 l_1 l_2 \cos\theta_2}}\right), -\theta_2 \right) \triangleq f_{23}(L,\phi)\,.
\end{array}
\end{equation}

\subsection{Map on the circle}

It follows from \eqref{map_circle}-\eqref{map_circleb} that the effect of two successive pops can be studied through a one-dimensional map $f_L:\mathbb{S} \to \mathbb{S}$ on the circle, which is parameterized by $L$. We define
\begin{equation}
\label{map_circle_comp}
f_L(\phi)=f(L,\phi)=f_{23}(L,f_{12}(L,\phi))\,,
\end{equation}
where $f_{12}$ and $f_{23}$ are given by \eqref{map_circle} and \eqref{map_circleb}.

Since the circle $\mathbb{S}$ is equipped with a cyclic order, we can denote $\phi_a<\phi_b<\phi_c$ if $\phi_a,\phi_b,\phi_c \in \mathbb{S}$ are distinct and if the arc going from $\phi_a$ to $\phi_c$ passes through $\phi_b$ when it follows the orientation of the circle. A map $f$ on the circle preserves the orientation of $\mathbb{S}$ if $f(\phi_a)<f(\phi_b)<f(\phi_c)$ for all $\phi_a<\phi_b<\phi_c \in \mathbb{S}$. When $f$ is continuous and differentiable, an equivalent condition is $df/d\phi>0$ for almost all $\phi \in \mathbb{S}$.
\begin{lemma}
\label{lem_orient}
Assume \eqref{T1}-\eqref{T2}-\eqref{T3} is satisfied. Then the map $f_L:\mathbb{S} \to \mathbb{S}$ (see \eqref{map_circle_comp}) preserves the orientation of $\mathbb{S}$. \hfill $\diamond$
\end{lemma}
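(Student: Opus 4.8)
The plan is to exploit the factorization $f_L=f_{23}\circ f_{12}$ from \eqref{map_circle_comp} together with the observation that each individual pop is orientation-\emph{reversing}. Since a composition of two orientation-reversing homeomorphisms of $\mathbb{S}$ preserves orientation, the lemma follows once we establish that both $f_{12}$ and $f_{23}$ reverse the cyclic order on $\mathbb{S}$.

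To see that $f_{12}$ reverses orientation, I would first record that the planar map $H_{12}$ of \eqref{syst} is an orientation-reversing homeomorphism. Since $\alpha$ in \eqref{equa_alpha} depends on $\theta_1$ only through $\cos\theta_1$, it is an even function of $\theta_1$, from which one checks that $H_{12}$ is an involution (consistent with the fact that popping the same pair twice is the identity), hence a homeomorphism; and wherever it is differentiable its Jacobian is
\[
DH_{12}=\begin{pmatrix} -1 & 0 \\ \ast & 1 \end{pmatrix},\qquad \det DH_{12}=-1,
\]
so $H_{12}$ reverses the orientation of the plane. The only non-smoothness occurs on the axis $\theta_1=0$, where $\alpha\to 0$ keeps the second component continuous, so this causes no difficulty.

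Next I would transfer this to the curve $\Gamma(L)$. By \eqref{invariant_set}, $\Gamma(L)$ is the level set $\{\bar L=L\}$ and is invariant under $H_{12}$; moreover the radial monotonicity $d\bar L^2/d\gamma<0$ established in the proof of Lemma \ref{polar_param} shows that each ray from the origin meets $\Gamma(L)$ exactly once and that the bounded region enclosed by $\Gamma(L)$ is precisely $\{\bar L>L\}$ (with $\bar L$ maximal, equal to $l_1+l_2+l_3$, at the origin). Since $H_{12}$ preserves $\bar L$, it maps $\{\bar L>L\}$ onto itself, i.e. it preserves both $\Gamma(L)$ and its interior. I then invoke the general principle that an orientation-reversing planar homeomorphism which maps a Jordan curve and its interior onto themselves necessarily reverses the induced cyclic order on the curve. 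Because the coordinate $\phi=\Pi(\theta_1,\theta_2)$ of \eqref{change_var} is the polar angle and $\Gamma(L)$ is star-shaped about the origin, the cyclic order of points of $\Gamma(L)$ by increasing $\phi$ agrees with the counterclockwise order in the plane; hence $f_{12}=g\circ H_{12}\circ g^{-1}$ reverses the orientation of $\mathbb{S}$. The identical argument, using $H_{23}$ from \eqref{systb} (again an involution with $\det DH_{23}=-1$ that preserves $\Gamma(L)$ and its interior), shows that $f_{23}$ reverses orientation. Composing the two, $f_L$ preserves orientation, which is the claim.

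The step I expect to be the main obstacle is making the general principle fully rigorous and keeping the orientation bookkeeping clean: one must argue carefully that preserving the interior is what forces the induced orientation on $\Gamma(L)$ to match the sign of $\det DH_{12}$, and that passing through the polar change of coordinates $g$ introduces no extra sign. A convenient way to make this precise is to note that $g$ sends each star-shaped curve $\Gamma(L)$ to $\{L\}\times(-\pi,\pi]$ with $\phi$ increasing counterclockwise, so $g$ is itself orientation-compatible; the computation $\det DH_{12}=-1$ together with interior-preservation then pins down the sign on the curve with no explicit knowledge of $g^{-1}$. Alternatively, one could try to verify $df_{12}/d\phi<0$ directly, but this requires differentiating through $g^{-1}$ and is considerably messier, so the topological argument is preferable.
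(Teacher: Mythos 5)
Your proof is correct, but it takes a genuinely different route from the paper's. The paper argues analytically: it applies the chain rule to $\tilde H_{23}\circ\tilde H_{12}=g\circ H_{23}\circ H_{12}\circ g^{-1}$, takes determinants, computes $\det(J_{H_{12}})=\det(J_{H_{23}})=-1$ and $\det(J_g)\le 0$ explicitly (the latter from inequality \eqref{inequa_param} in the proof of Lemma \ref{polar_param}), and concludes that $\frac{d}{d\phi}f_L(\phi)=\det(J_g)(\theta''_1,\theta''_2)/\det(J_g)(\theta_1,\theta_2)>0$. You instead factor $f_L=f_{23}\circ f_{12}$ and argue topologically that each factor reverses the cyclic order of $\mathbb{S}$: each $H_{ij}$ is an involution (hence a homeomorphism) with Jacobian determinant $-1$ where differentiable, it preserves $\bar L$ and hence the Jordan curve $\Gamma(L)$ together with its star-shaped interior $\{\bar L>L\}$, and an orientation-reversing homeomorphism of a closed disk restricts to an orientation-reversing homeomorphism of its boundary circle. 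The two factors of $-1$ appear in both proofs; the difference lies in how they are transferred to the circle --- you use a standard (but nontrivial) degree-theoretic principle plus star-shapedness, where the paper uses the explicit polar coordinates and the sign of $\det(J_g)$. Note that both routes ultimately rest on \eqref{inequa_param}: the paper uses it to sign $\det(J_g)$, while you use it (via radial monotonicity) to guarantee that each ray from the origin meets $\Gamma(L)$ exactly once, so that the polar angle $\phi$ respects the planar orientation and the region bounded by $\Gamma(L)$ is indeed $\{\bar L>L\}$. What your argument buys is conceptual clarity (two reflections compose to a rotation-like map) and graceful handling of the non-smooth loci $\theta_1=0$ and $\theta_2=0$, since a single point of differentiability suffices to pin down the orientation class of a homeomorphism on a connected domain. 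What the paper's computation buys is quantitative information your argument does not produce but which is reused downstream: the explicit formula \eqref{sign_orientation} for $df_L/d\phi$ underlies the invariant measure in Lemma \ref{lem_all_orbits}, the $C^1$ regularity needed for the rotation number, and the bounded-variation estimate on $\log(df_L/d\phi)$ invoked for Denjoy's theorem in the proof of Theorem \ref{theo_dense}; so the lemma could be proved your way, but the paper would still need \eqref{sign_orientation} elsewhere.
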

\begin{proof}
The Jacobian matrix of $\tilde{H}_{23} \circ \tilde{H}_{12}=g \circ H_{23} \circ H_{12} \circ g^{-1}$ yields
\begin{equation}
\label{equa_Jacob}
J_{\tilde{H}_{23}}(L',\phi') \, J_{\tilde{H}_{12}}(L,\phi) = J_g(\theta''_1,\theta''_2) \, J_{H_{23}}(\theta'_1,\theta'_2) \, J_{H_{12}}(\theta_1,\theta_2) \, J^{-1}_g(L,\phi) \,,
\end{equation}
where $J_H$ is the Jacobian matrix of $H$, i.e. $(J_H)_{ij}=\partial H_i/\partial x_j$ and where $(\theta'_1,\theta'_2)=H_{12}(\theta_1,\theta_2)$, $(\theta''_1,\theta''_2)=H_{23}(\theta'_1,\theta'_2)$, $(L,\phi)=g(\theta_1,\theta_2)$, and $(L',\phi')=g(\theta'_1,\theta'_2)$. For the sake of clarity, we omit the variables and \eqref{equa_Jacob} implies
\begin{equation}
\label{det_Jacob}
\det(J_{\tilde{H}_{23}}) \, \det(J_{\tilde{H}_{12}}) = \det(J_g) \, \det(J_{H_{23}}) \, \det(J_{H_{12}}) \, \det(J_g)^{-1}\,.
\end{equation}
It follows from \eqref{syst}-\eqref{systb} that
\begin{equation}
\label{det_J_H_tilde}
\det(J_{\tilde{H}_{23}}) \, \det(J_{\tilde{H}_{12}})=  \frac{\partial}{\partial \phi} f_{23}(L,\phi') \frac{\partial}{\partial \phi} f_{12}(L,\phi)=\frac{d }{d\phi} f_L(\phi)
\end{equation}
and from \eqref{map_circle}-\eqref{map_circleb} that
\begin{equation}
\label{det_J_H}
\det(J_{H_{12}}) = \det(J_{H_{23}}) =-1 \,.
\end{equation}
In addition, we have $\displaystyle \det(J_g)=\frac{\partial \bar{L}}{\partial \theta_1} \frac{\partial \Pi}{\partial \theta_2}-\frac{\partial \bar{L}}{\partial \theta_2} \frac{\partial \Pi}{\partial \theta_1}$, with
\begin{equation}
\label{der_atan2}
\frac{\partial \Pi}{\partial \theta_1} = \frac{-\theta_2}{\theta_1^2+\theta_2^2} \qquad \qquad
\frac{\partial \Pi}{\partial \theta_2} = \frac{\theta_1}{\theta_1^2+\theta_2^2}
\end{equation}
and, from \eqref{invariant_set},
\begin{eqnarray*}
\frac{\partial \bar{L}}{\partial \theta_1} & = & \frac{1}{L} (l_1 l_2 \sin \theta_1 + l_1 l_3 \sin(\theta_1+\theta_2)) \,, \\
\frac{\partial \bar{L}}{\partial \theta_2} & = & \frac{1}{L} (l_2 l_3 \sin \theta_2 + l_1 l_3 \sin(\theta_1+\theta_2))\,.
\end{eqnarray*}
This yields
\begin{equation}
\label{det_J_g}
\begin{split}
\det(J_g) & =\frac{-1}{L (\theta_1^2+\theta_2^2)} \left( l_1 l_2 \theta_1 \sin \theta_1 + l_2 l_3 \theta_2 \sin \theta_2 + l_1 l_3 (\theta_1+\theta_2) \sin(\theta_1+\theta_2) \right) \\
& \leq 0\,,
\end{split}
\end{equation}
where the inequality follows from \eqref{inequa_param} when \eqref{T1}-\eqref{T2}-\eqref{T3} is satisfied (the equality holds only if $(\theta_1,\theta_2)\in \{(0,0),(0,\pi),(\pi,0),(\pi,\pi)\}$). Injecting \eqref{det_J_H_tilde}, \eqref{det_J_H}, and \eqref{det_J_g} into \eqref{det_Jacob}, we obtain
\begin{equation}
\label{sign_orientation}
\frac{d }{d\phi} f_L(\phi) = \det(J_g)(\theta''_1,\theta''_2)/\det(J_g)(\theta_1,\theta_2)> 0
\end{equation}
for all $\phi \in \mathbb{S}$ such that $g^{-1}(L,\phi) \notin \{(0,0),(0,\pi),(\pi,0),(\pi,\pi)\}$. Since $f_L$ is continuous on $\mathbb{S}$, it is an orientation-preserving map on the circle.
\end{proof}

The map $f_L$ preserves the orientation of $\mathbb{S}$, so that we can capture its behavior by the rotation number
\begin{equation*}
\rho(L)=\frac{1}{2\pi} \lim_{n \rightarrow \infty} \frac{(F_L)^n(\phi)}{n}\,, \quad \phi \in \mathbb{S}\,,
\end{equation*}
where $F_L:\mathbb{R} \to \mathbb{R}$ is the \textsl{lifting} of $f_L$, i.e. $F_L$ is a continuous function that satisfies $F_L(\phi) \bmod 2\pi = f_L(\phi \bmod 2 \pi)$. Since it follows from \eqref{det_J_g} and \eqref{sign_orientation} that $f_L \in C^1$ , the rotation number is well-defined and does not depend on $\phi$ (see e.g. \cite{Guckenheimer}, Proposition 6.2.1). When the rotation number is rational, the map has at least one periodic orbit (see e.g. \cite{Guckenheimer}, Proposition 6.2.4). The following result shows that, in our case, all the orbits of $f_L$ are periodic when the rotation number is rational.

\begin{lemma}
\label{lem_all_orbits}
Assume \eqref{T1}-\eqref{T2}-\eqref{T3} is satisfied. If the map $f_L:\mathbb{S} \to \mathbb{S}$ (see \eqref{map_circle_comp}) is characterized by a rotation number $\rho(L) \in \mathbb{Q}$, then all the orbits are periodic with the same period, i.e. there exists $N \in \mathbb{N}$ such that $(f_L)^N=\texttt{Id}$.
\end{lemma}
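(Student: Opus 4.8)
The plan is to exploit the fact that $f_L$ is the composition of two \emph{involutions}. Popping the same pair of bars twice returns the linkage to its initial configuration, so $H_{12}$ and $H_{23}$ (hence, in the $(L,\phi)$ coordinates, $\sigma_1:=f_{12}(L,\cdot)$ and $\sigma_2:=f_{23}(L,\cdot)$) are involutions of the circle $\mathbb{S}$; by \eqref{det_J_H} together with $\det(J_g)\le 0$ each of them is moreover \emph{orientation-reversing}, and so has exactly two fixed points. Under \eqref{T1}-\eqref{T2}-\eqref{T3}, Remark~\ref{rem_modulo} excludes $\theta_1=\pi$ and $\theta_2=\pi$ from $\Gamma(L)$, so these fixed points are precisely the configurations with $\theta_1=0$ (for $\sigma_1$) and with $\theta_2=0$ (for $\sigma_2$). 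Writing $f_L=\sigma_2\circ\sigma_1$ then exhibits $f_L$ as a \emph{reversible} map, $\sigma_i\,f_L\,\sigma_i=f_L^{-1}$ for $i=1,2$. I would also record the order-two symmetry $S(\theta_1,\theta_2)=(-\theta_1,-\theta_2)$, which (since $\bar L$ and the $\arccos$ term are even in the angles) preserves $\Gamma(L)$ and commutes with both $H_{12}$ and $H_{23}$; in the $\phi$-coordinate $S$ is the fixed-point-free half-turn $\phi\mapsto\phi+\pi$, and it commutes with $f_L$.

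Assume now $\rho(L)=p/q\in\mathbb{Q}$. By the cited result (\cite{Guckenheimer}, Proposition 6.2.4) $f_L$ has a periodic orbit, necessarily of period $q$; set $g:=f_L^{\,q}$. Then $g$ is an orientation-preserving $C^1$ circle map with $\mathrm{Fix}(g)\neq\varnothing$, it still commutes with $S$, and it is still reversed by each $\sigma_i$ (since $\sigma_i\,g\,\sigma_i=f_L^{-q}=g^{-1}$). The statement reduces to proving $g=\texttt{Id}$, for then $(f_L)^q=\texttt{Id}$ and every orbit is periodic with the common period $N=q$. I would argue by contradiction: if $g\neq\texttt{Id}$, then $\mathrm{Fix}(g)$ is a proper closed set on whose complementary arcs $g$ moves points strictly in one direction, so $g$ has \emph{hyperbolic} (attracting or repelling) or \emph{semistable} periodic points. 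The plan is to rule these out using the two symmetries. Differentiating $\sigma_i\,g\,\sigma_i=g^{-1}$ at any point of $\mathrm{Fix}(g)\cap\mathrm{Fix}(\sigma_i)$ gives $g'=1$ there, while reversibility interchanges attracting and repelling fixed points of $g$; the half-turn $S$, being orientation-preserving and fixed-point-free, preserves the attracting/repelling type and permutes $\mathrm{Fix}(g)$. Combining these two actions constrains the cyclic arrangement of attractors and repellers, and in the simplest case it is already contradictory: a lone attracting/repelling pair is $S$-antipodal, so $S$ would send the attractor to a point forced to be simultaneously attracting and repelling.

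The hard part is to upgrade these local and symmetry constraints into a proof that $g$ has \emph{no} non-fixed points at all. The involution structure by itself is not enough: a purely topological product of two orientation-reversing involutions can be a North--South map, and one checks that symmetric configurations with an even number of alternating attractors and repellers remain compatible with both $\sigma_1$ and $S$. Hence the decisive input must come from the specific geometry of the linkage rather than from soft topology. Two routes look promising: (i) extract from the reflection structure of the pops an $f_L$-invariant (billiard-type) measure of full support on $\Gamma(L)$, whence every non-fixed point of $g$ would be wandering, contradicting invariance and forcing $g=\texttt{Id}$; or (ii) use a global monotonicity property of $f_L$ to conclude $g'\equiv 1$ and then $g=\texttt{Id}$. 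Establishing this absence of hyperbolic periodic orbits is, in my view, the main obstacle; the reduction to $g=f_L^{\,q}=\texttt{Id}$ and the symmetry bookkeeping above are the routine parts.
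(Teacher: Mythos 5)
Your reduction to $g=f_L^{\,q}=\texttt{Id}$ and your symmetry bookkeeping are correct (and the observation that $f_L$ is the composition of two orientation-reversing involutions is a genuine structural remark that the paper does not use), but the proposal has a real gap, and you name it yourself: everything hinges on excluding attracting, repelling, or semistable periodic points of $g$, and for that step you offer only two conjectural routes without carrying either out. An orientation-preserving $C^1$ circle map with rational rotation number can perfectly well have attracting periodic orbits, and --- as you rightly concede --- reversibility and the half-turn symmetry alone do not rule this out. So what you defer as ``the main obstacle'' is precisely the content of the lemma; as written, the proposal proves nothing beyond the standard Poincar\'e dichotomy.

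The good news is that your route (i) is exactly the paper's proof, and the ``decisive input from the geometry'' you are looking for is already available from the computations preceding the lemma, so this step is much easier than you estimate. By \eqref{det_J_H}, $\det(J_{H_{12}})=\det(J_{H_{23}})=-1$: the pops preserve Lebesgue measure $d\theta_1\,d\theta_2$ up to orientation. Transporting this measure through the change of coordinates $g$ gives
\begin{equation*}
\mu([\phi_a,\phi_b])=\int_{\phi_a}^{\phi_b} \bigl|\det\bigl(J_g^{-1}(L,\phi)\bigr)\bigr|\,d\phi\,,
\end{equation*}
and the chain-rule identity \eqref{sign_orientation}, namely $df_L/d\phi=\det(J_g)(\theta''_1,\theta''_2)/\det(J_g)(\theta_1,\theta_2)$, shows after substitution that $\mu$ is $f_L$-invariant. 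Moreover, by \eqref{det_J_g} the determinant $\det(J_g)$ vanishes only at $(0,0),(0,\pi),(\pi,0),(\pi,\pi)$, and none of these points lies on $\Gamma(L)$ when \eqref{T1}--\eqref{T2}--\eqref{T3} and \eqref{bounds} hold (Remark \ref{rem_modulo}); hence the density of $\mu$ is bounded above and below on the relevant circle, so $\mu$ assigns positive mass to every nondegenerate arc and zero mass to points. The contradiction is then immediate: if some orbit converged to, but did not lie on, a periodic orbit, i.e.\ $(f_L)^{kN}(\phi)\to\phi^*$ with $(f_L)^N(\phi^*)=\phi^*$ and $\phi\neq\phi^*$, the arcs $[(f_L)^{kN}(\phi),\phi^*]$ would all carry the same mass $\mu([\phi,\phi^*])>0$ by invariance while shrinking to the point $\phi^*$. (Your alternative phrasing via wandering arcs of $g$ works equally well with this $\mu$.) In short: the parts you call routine are fine, the part you postpone is the whole lemma, and the missing ingredient is this explicit invariant measure, which drops out of the Jacobian identities the paper has already established.
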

\begin{proof}
We first introduce the measure
\begin{equation}
\label{inv_measure}
\mu([\phi_a,\phi_b]) = \int_{\phi_a}^{\phi_b} |\det (J_g^{-1}(L,\phi))|\, d\phi
\end{equation}
where $J_g$ is the Jacobian matrix of $g$ defined in \eqref{change_var}. Note that $\mu([\phi_a,\phi_b])=0 \Leftrightarrow \phi_a=\phi_b$. The measure $\mu$ is invariant with respect to $f_L$, since we have
\begin{equation*}
\begin{split}
\mu([f_L(\phi_a),f_L(\phi_b)]) & = \int_{f_L(\phi_a)}^{f_L(\phi_b)} |\det (J_g^{-1}(L,\phi))|\, d\phi \\
& = \int_{\phi_a}^{\phi_b} |\det (J_g^{-1}(L,f_L(\phi)))| \left(\frac{d f_L}{d\phi}\right) \, d\phi \\
& =  \int_{\phi_a}^{\phi_b} |\det (J_g^{-1}(L,\phi))|\, d\phi \\
& = \mu([\phi_a,\phi_b])
\end{split}
\end{equation*}
where we used \eqref{det_Jacob}, \eqref{det_J_H_tilde}, and \eqref{det_J_H}.\\
Since $f_L$ is an orientation-preserving map (Lemma \ref{lem_orient}) and has a rational rotation number by the hypothesis of the theorem, every orbit is periodic (of period $N$) or converges to a periodic orbit (see e.g. \cite{Guckenheimer}). Assume that the second case is possible, i.e. there exist $\phi,\phi^* \in \mathbb{S}$ with $\phi \neq \phi^*$ such that
\begin{equation*}
\lim_{k \rightarrow \infty} (f_L)^{kN}(\phi)=\phi^* \qquad (f_L)^N(\phi^*)=\phi^*\,.
\end{equation*}
We have $\mu([\phi,\phi^*]) \neq 0$ and
\begin{equation*}
\lim_{k \rightarrow \infty} \mu([(f_L)^{kN}(\phi),(f_L)^{kN}(\phi^*)])=\lim_{k \rightarrow \infty} \mu([(f_L)^{kN}(\phi),\phi^*]) = 0\,.
\end{equation*}
This contradicts the invariance of $\mu$. Then every orbit must be periodic.
\end{proof}
\begin{remark}
Since the map $f_L$ has a non-singular invariant measure $\mu$, it is conjugate to a pure rotation $\varphi_{\rho(L)}:\phi \mapsto \phi+2\pi \rho(L)$, i.e. there exists a conjugating map $h:\mathbb{S} \to \mathbb{S}$ such that $h \circ f_L = \varphi_{\rho(L)} \circ h$. The conjugating map is given by $h(\phi)=\mu([0,\phi])$ \cite{cornfeld2012ergodic}. It follows that, for all $\phi \in \mathbb{S}$,
\begin{equation*}
\rho(L) = \frac{1}{2\pi} \left(\varphi_{\rho(L)} \circ h (\phi) - h(\phi) \right) = \frac{1}{2\pi} \Big(h \circ f_L(\phi) - h(\phi) \Big) = \frac{1}{2\pi} \mu([\phi,f_L(\phi)]) \,.
\end{equation*}
Then \eqref{inv_measure} implies that the rotation number is given by
\begin{equation*}
\rho(L) = \frac{1}{2\pi} \int_\phi^{f_L(\phi)} |\det (J_g^{-1}(L,\phi'))| \, d\phi'
\end{equation*}
for all $\phi \in \mathbb{S}$. \hfill $\diamond$
\end{remark}

Finally, the map $f_L$ satisfies the following important property.
\begin{lemma}
\label{lam_f_monot}
Assume \eqref{T1}-\eqref{T2}-\eqref{T3} is satisfied. Then, the map $f_L:\mathbb{S} \to \mathbb{S}$ (see \eqref{map_circle_comp}) satisfies
\begin{equation*}
\frac{\partial }{\partial L} f(L,\phi)
\begin{cases}
\geq 0 & \textrm{if } l_2 \geq \max\{l_1,l_3\}\,. \\
\leq 0 & \textrm{if } l_2 \leq \min\{l_1,l_3\}\,.
\end{cases}
\end{equation*}
If $l_1\neq l_2$ or $l_2\neq l_3$, the equality holds only for $\phi \in \{\pi/2,-\pi/2,f_{12}^{-1}(L,0),f_{12}^{-1}(L,\pi)\}$.
\end{lemma}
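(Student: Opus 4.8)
The plan is to differentiate the composition $f(L,\phi)=f_{23}(L,f_{12}(L,\phi))$ in $L$ and to show that, under the hypotheses, the three factors produced by the chain rule are each sign-definite on the feasible set. Writing $\psi=f_{12}(L,\phi)$, the chain rule gives
\begin{equation*}
\frac{\partial f}{\partial L}(L,\phi)=\frac{\partial f_{23}}{\partial L}(L,\psi)+\frac{\partial f_{23}}{\partial \psi}(L,\psi)\,\frac{\partial f_{12}}{\partial L}(L,\phi).
\end{equation*}
The middle factor is negative: since a single pop reverses orientation, the argument of Lemma~\ref{lem_orient} applied to $\tilde H_{23}$ alone (using $\det(J_{H_{23}})=-1$ together with $\det(J_g)\le 0$ from \eqref{det_J_g}) yields $\partial f_{23}/\partial\psi<0$. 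It therefore remains to pin down the signs of the two single-pop derivatives $\partial f_{12}/\partial L$ and $\partial f_{23}/\partial L$.

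First I would treat $\partial f_{12}/\partial L$. Because $\bar L$ is invariant under $H_{12}$ and $\phi=\Pi(\theta_1,\theta_2)$ is scale invariant, displacing the source point radially, $\gamma\mapsto\gamma\,g^{-1}(L,\phi)$ for $\gamma>0$, changes $L$ while keeping $\phi$ fixed; hence $\partial f_{12}/\partial L$ equals the angular velocity of the image $H_{12}(\gamma\theta)$ divided by $d\bar L/d\gamma$, the latter being negative by \eqref{inequa_param}. A short computation of that angular velocity, after absorbing the discontinuous factor $\textrm{sign}(\theta_1)$ into the smooth odd pop-angle $\beta(\theta)=\textrm{sign}(\theta)\,\alpha(\theta)$, produces a numerator proportional to $-\theta_1\lambda(\theta_1)$, where $\lambda(\theta)=\theta\,\beta'(\theta)-\beta(\theta)$.

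The sign of $\lambda$ is the technical heart. One checks $\lambda(0)=0$ and $\lambda'(\theta)=\theta\,\beta''(\theta)$, and differentiating the explicit formula \eqref{equa_alpha} gives $\beta''(\theta)=-\,l_1 l_2\,(l_2^2-l_1^2)\,d^{-4}\sin\theta$, which has constant sign on $(0,\pi)$. Thus $\alpha$ is globally convex or concave there and $\textrm{sign}\,\lambda(\theta)=\textrm{sign}(\theta)\,\textrm{sign}(l_1-l_2)$, so that $\textrm{sign}\big(\theta_1\lambda(\theta_1)\big)=\textrm{sign}(l_1-l_2)$ and finally $\textrm{sign}\big(\partial f_{12}/\partial L\big)=\textrm{sign}(l_1-l_2)$, vanishing exactly at $\theta_1=0$, i.e. $\phi=\pm\pi/2$. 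The identical computation for the pop of bars $2$–$3$ (with $l_1$ replaced by $l_3$ and the active angle being the intermediate $\theta_2'$) gives $\textrm{sign}\big(\partial f_{23}/\partial L\big)=\textrm{sign}(l_2-l_3)$, vanishing exactly at $\theta_2'=0$, that is at $\psi\in\{0,\pi\}$, i.e. $\phi\in\{f_{12}^{-1}(L,0),f_{12}^{-1}(L,\pi)\}$; here one invokes Remark~\ref{rem_modulo} to ensure the intermediate angle stays in $(-\pi,\pi)$, where the sign of $\lambda$ is controlled.

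Finally I would assemble the signs. If $l_2\ge\max\{l_1,l_3\}$ then $\partial f_{12}/\partial L\le 0$, so the negative middle factor makes the product term $\ge 0$, while $\partial f_{23}/\partial L\ge 0$; the sum is therefore $\ge 0$. The reverse inequality follows identically when $l_2\le\min\{l_1,l_3\}$. The main obstacle — and the reason condition \eqref{cond_main_theorem} enters — is precisely this sign bookkeeping: the negative middle factor reverses the contribution of the first pop, and only when $l_1-l_2$ and $l_3-l_2$ share a sign do the two pop contributions reinforce rather than compete. Because both summands then have the same sign, equality forces both to vanish, and the set where this is possible is contained in $\{\theta_1=0\}\cup\{\theta_2'=0\}$; as soon as $l_1\ne l_2$ or $l_2\ne l_3$ (so that the two derivatives are not both identically zero), this confines the zeros of $\partial f/\partial L$ to $\phi\in\{\pi/2,-\pi/2,f_{12}^{-1}(L,0),f_{12}^{-1}(L,\pi)\}$, as claimed.
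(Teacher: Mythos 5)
Your proposal is correct and follows essentially the same route as the paper's own proof: the same chain-rule decomposition of $f=f_{23}(L,f_{12}(L,\phi))$, the same negativity of the middle factor $\partial f_{23}/\partial\phi$ via the Jacobian-determinant argument, the same radial-displacement trick converting the $\gamma$-derivative into $\partial/\partial L$ through \eqref{inequa_param}, and your $\lambda(\theta)=\theta\beta'(\theta)-\beta(\theta)$ is, up to a positive factor and a sign, exactly the paper's $G(\theta_1)$ analyzed by the same second-derivative computation. One caution: for $f_{23}$ the computation is not literally \guillemets{identical with $l_1$ replaced by $l_3$} --- in \eqref{map_circleb} the roles of the two arguments of $\Pi$ are swapped relative to \eqref{map_circle}, which flips a sign in the angular-velocity numerator and is precisely why the outcome is $\textrm{sign}\left(\partial f_{23}/\partial L\right)=\textrm{sign}(l_2-l_3)$ rather than $\textrm{sign}(l_3-l_2)$ --- but the sign you state is the correct one (and the one the paper asserts, equally tersely, with \guillemets{it follows on similar lines}), so your final assembly and the identification of the equality set go through.
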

\begin{proof}
Using \eqref{map_circle}, we have
\begin{equation*}
\left. \frac{\partial }{\partial \gamma} f_{12}(g(\gamma \theta_1,\gamma \theta_2))\right|_{\gamma=1}=
\left.\frac{\partial \Pi}{\partial \theta_1}\right|_{(-\theta_1,\theta_2+ \Delta(\theta_1))} (-\theta_1) + \left. \frac{\partial \Pi}{\partial \theta_2} \right|_{(-\theta_1,\theta_2+ \Delta(\theta_1))} \left(\theta_2+\theta_1 \frac{d \Delta}{d \theta_1}\right)
\end{equation*}
with
\begin{equation*}
\Delta(\theta_1) \triangleq \textrm{sign}(\theta_1) \, 2 \arccos \left(\frac{l_2+l_1 \cos \theta_1}{\sqrt{l_1^2+l_2^2+2 l_1 l_2 \cos\theta_1}}\right)
\end{equation*} 
and \eqref{der_atan2} leads to
\begin{equation}
\label{der_f_gamma}
\left. \frac{\partial }{\partial \gamma} f_{12}(g(\gamma \theta_1,\gamma \theta_2))\right|_{\gamma=1}=  \frac{|\theta_1|}{\theta_1^2+(\theta_2+\Delta(\theta_1))^2} G(\theta_1)
\end{equation}
with
\begin{equation*}
G(\theta_1) \triangleq \textrm{sign}(\theta_1) \left(\Delta(\theta_1) - \theta_1 \frac{d \Delta}{d \theta_1}\right)\,.
\end{equation*}
We have
\begin{equation}
\label{der_G}
\frac{dG}{d\theta_1} = -|\theta_1| \frac{d^2 \Delta}{d \theta_1^2} = -|\theta_1| \frac{2 l_1 l_2 \sin \theta_1 (l_1^2-l_2^2)}{(l_1^2+l_2^2+2 l_1 l_2 \cos \theta_1)^2}
\end{equation}
where we omitted the lengthy (but straightforward) computation of $d^2 \Delta/d\theta_1^2$.
%\begin{equation*}
%\frac{d \Delta}{d \theta_1} = \frac{2 l_1(l_1+l_2 \cos \theta_1)}{l_1^2+l_2^2+2 l_1 l_2 \cos \theta_1} \qquad 
%\frac{d^2 \Delta}{d \theta_1^2} = \frac{2 l_1 l_2 \sin \theta_1 (l_1^2-l_2^2)}{(l_1^2+l_2^2+2 l_1 l_2 \cos \theta_1)^2}\,.
%\end{equation*}
Since $G$ is continuous and $G(0)=\Delta(0)=0$, \eqref{der_G} implies that, for all $\theta_1 \in(-\pi,\pi]$, $G(\theta_1) \leq 0$ if $l_1\geq l_2$ and $G(\theta_1) \geq 0$ if $l_1 \leq l_2$. Equivalently, it follows from \eqref{der_f_gamma} that
\begin{equation*}
\left. \frac{\partial }{\partial \gamma} f_{12}(g(\gamma \theta_1,\gamma \theta_2))\right|_{\gamma=1}
\begin{cases}
\leq 0 & \textrm{if } l_1\geq l_2 \\
\geq 0 & \textrm{if } l_1\leq l_2
\end{cases}
\end{equation*}
and \eqref{inequa_param} implies
\begin{equation}
\label{der_L12}
\frac{\partial }{\partial L} f_{12}(L,\phi)
\begin{cases}
\geq 0 & \textrm{if } l_1\geq l_2\,. \\
\leq 0 & \textrm{if } l_1\leq l_2\,.
\end{cases}
\end{equation}
If $l_1 \neq l_2$, the equality holds only if $\sin \theta_1 =0 $. Since \eqref{T1} and \eqref{T3} are satisfied, we have $\theta_1<\pi$, which implies that the equality holds only if $\theta_1=0$ (i.e. $\phi\in\{-\pi/2,\pi/2\})$.\\
It follows on similar lines that
\begin{equation}
\label{der_L23}
\frac{\partial }{\partial L} f_{23}(L,\phi)
\begin{cases}
\geq 0 & \textrm{if } l_2\geq l_3 \\
\leq 0 & \textrm{if } l_2\leq l_3
\end{cases}
\end{equation}
and, if $l_2 \neq l_3$, the equality holds only if $\theta_2=0$ (i.e. $\phi\in\{0,\pi\}$).\\
Finally, $\det(J_{\tilde{H}_{23}}) = \det(J_g) \, \det(J_{H_{23}}) \, \det(J_g)^{-1}$ and \eqref{det_J_H_tilde}-\eqref{det_J_H}-\eqref{det_J_g} imply that
\begin{equation}
\label{der_phi23}
\frac{\partial f_{23}}{\partial \phi}  < 0\,
\end{equation}
and the result follows from
\begin{equation*}
\frac{\partial f}{\partial L}(L,\phi)=\frac{\partial f_{23}}{\partial L}(L,f_{12}(L,\phi)) + \frac{\partial f_{23}}{\partial \phi} (L,f_{12}(L,\phi)) \, \frac{\partial f_{12}}{\partial L}(L,\phi)
\end{equation*}
with the inequalities \eqref{der_L12},\eqref{der_L23}, and \eqref{der_phi23}.
\end{proof}

\subsection{Proof of the main result}
\label{subsec:proof_main}

We are now in position to prove Theorem \ref{theo_dense}, or equivalently, Theorem \ref{main_theorem}.
\begin{proof}
The orbits of $\langle H_{23} \rangle \circ \langle H_{12}\rangle$ are dense in $\Gamma(L)$ if and only if the orbits of $f_L$ are dense in $\mathbb{S}$.\\
It follows from Lemma \ref{lem_orient} that $f_L$ is an orientation-preserving map on the circle. It is clear from \eqref{det_J_g} and \eqref{sign_orientation} that $\log (d f_L/d\phi)$ has bounded variation. Hence Denjoy's theorem implies that the orbits of $f_L$ are dense if and only if the rotation number $\rho(L)$ is irrational (see e.g. \cite{Guckenheimer}, Theorem 6.2.5).\\
Consider a set of parameters $l_1$,$l_2$,$l_3$ satisfying \eqref{cond_main_theorem} (with $l_1\neq l_2$ or $l_2\neq l_3$) and admissible values $L$ satisfying \eqref{T1}-\eqref{T2}-\eqref{T3}. We assume that $\rho(L_a)=\rho(L_b) \in \mathbb{Q}$ for some $L_a < L_b$. Lemma \ref{lem_all_orbits} implies that
\begin{equation}
\label{contradict_theor}
(f^{(L_a)})^N(\phi)=(f^{(L_b)})^N(\phi)=\phi
\end{equation}
for all $\phi \in \mathbb{S}$ and for some $N \in \mathbb{N}$. For some $\phi \notin \{\pi/2,-\pi/2,f_{12}^{-1}(L,0),f_{12}^{-1}(L,\pi)\}$, it follows from Lemma \ref{lam_f_monot} that
\begin{equation}
\label{property_monot}
(f^{(L_a)})(\phi) < (f^{(L_b)})(\phi)
\end{equation}
provided that $l_2 \geq l_1$ and $l_2 \geq l_3$. Since $(f^{(L_a)})^{N-1}$ is orientation-preserving, we have
\begin{equation*}
(f^{(L_a)})^N(\phi) < ((f^{(L_a)})^{N-1} \circ (f^{(L_b)}))(\phi) < (f^{(L_b)})^N(\phi)
\end{equation*}
where the last inequality follows again from \eqref{property_monot}. If $l_2 \leq l_1$ and $l_2 \leq l_3$, we obtain similarly $(f^{(L_a)})^N(\phi)>(f^{(L_b)})^N(\phi)$. This contradicts \eqref{contradict_theor}, so that $\rho(L_a)=\rho(L_b) \in \mathbb{Q}$ implies $L_a = L_b$. Then, for given values $l_1$,$l_2$,$l_3$, the function $L \mapsto \rho(L)$ is nowhere constant. In addition, since $f_L$ does not admit a fixed point (i.e. two successive pops of different bars cannot leave the configuration unchanged), it follows from Lemma \ref{lam_f_monot} that $L \mapsto \rho(L)$ is strictly monotone (increasing or decreasing) (see e.g. \cite{Guckenheimer}, Proposition 6.2.3). In addition, $\rho(L)$ is a continuous function of $L$ (see e.g. \cite{Guckenheimer}, Proposition 6.2.2), so that the inverse $\rho^{-1}$ is absolutely continuous \cite{Lusin}. Then, $\rho^{-1}(E)$ is a zero measure set if $E$ is a zero measure set (Lusin's condition). With $E=\mathbb{Q}$, one obtains that $\rho(L)$ is rational only on a zero measure set of admissible values $L$. This concludes the proof.
\end{proof}

\section{Conclusion}
\label{conclu}

Motivated by longstanding open questions in computational geometry, we have studied 
a simple framework, the four-bar linkage with one fixed bar, and its behavior when a long series of ``pops'' are alternatively applied to two mobile vertices. Our main contribution is to show that, 
under particular conditions, the dynamics of the four-bar linkage under pops is 
topologically equivalent to an orientation-preserving map of the circle with an 
irrational rotation number, so that each orbit densely fills the configuration space. %To our knowledge, this approach is the first attempt to understand the behavior of mechanical linkages with tools from dynamical systems theory.

A general statement on the behavior of the four-bar linkage under pops is summarized in Conjecture~\ref{conjecture} which, if true, would have important consequences in the theory of mechanical linkages. In the context of dynamical systems theory, the conjecture can be recast as follows.
\begin{customconj}{1bis}
For almost all parameters $l_1,l_2,l_3,L$, every orbit of the map \eqref{syst}-\eqref{systb}, with an initial condition $(\theta_1,\theta_2)\in \Gamma(L)$, is dense in a connected component of $\Gamma(L)$.
\end{customconj}

Although additional conditions on the length of the bars were needed to establish the results of this paper, 
numerical simulations suggest that they are conservative and we suspect that they could be removed by 
obtaining additional properties of the rotation number. 
On top of these restrictions on the numerical values of the parameters, 
our results only hold for configuration spaces that are connected (i.e. non-Grashof cases).
When configuration spaces are not connected (Grashof cases), other results and techniques are required for a complete proof of the conjecture (see e.g. \cite{Izmestiev}).

\section*{Acknowledgement}

We thank F. Gonze and P.-Y. Chevalier for their help in the proof of Proposition \ref{prop_non_ergo}. We are also thankful to Ivan Izmestiev to have pointed out his work and relevant references on the four bars problem.

\bibliography{pops}
\bibliographystyle{siam}

\iffalse

%\begin{thebibliography}{100}
%
%\bibitem{Guckenheimer}
%{\sc J.~Guckenheimer and P.~Holmes}, {\em Nonlinear Oscillations, dynamical
  %systems, and bifurcations of vector fields}, New York: Springer-Verlag, 1983.
%
%\bibitem{Lusin}
%{\sc A.~Villani}, {\em {On Lusin's condition for the inverse function}},
  %Rendiconti del Circolo matematico di Palermo, 33 (1984), pp.~331--335.
	%
%\end{thebibliography}
\fi
\end{document}